\journal{Journal of Computational and Applied Mathematics}
\DeclareMathOperator*{\argmin}{arg\,min}
\newcommand{\cA}{\mathcal{A}}
\newcommand{\cB}{\mathcal{B}}
\newcommand{\cE}{\mathcal{E}}
\newcommand{\cG}{\mathcal{G}}
\newcommand{\cI}{\mathcal{I}}
\newcommand{\cV}{\mathcal{V}}
\newcommand{\half}{\frac{1}{2}}
\newcommand{\Reals}{\mathbb{R}}
\newcommand{\mforall}{\forall\,}
\newcommand{\id}{\operatorname{id}}
\newcommand{\dvg}{\operatorname{div}}
\newcommand{\Grad}{G}
\newcommand{\Div}{G^*}
\newcommand{\vsp}{V}
\newcommand{\esp}{W}
\newcommand{\vone}{\mathbf{1}}
\newcommand{\eone}{\mathbf{1}}
\newcommand{\dif}{\,\mathrm d}
\newcommand{\eff}[1][]{e_{\text{ff}\ifthenelse{\isempty{#1}}{}{,#1}}}
\newcommand{\norm}[1]{\left\lVert#1\right\rVert}
\newcommand{\ip}[3][]{\left(#2,#3\right)\ifthenelse{\isempty{#1}}{}{_{#1}}} % Inner product
\newcommand{\avedg}[2]{\{\!\!\{#1\}\!\!\}_{#2}}
\newcommand{\Poincare}{Poincar\'{e}}
\let\originalleft\left
\let\originalright\right
\renewcommand{\left}{\mathopen{}\mathclose\bgroup\originalleft}
\renewcommand{\right}{\aftergroup\egroup\originalright}
\newtheorem{theorem}{Theorem}[section]
\newtheorem{lemma}[theorem]{Lemma}
\newtheorem*{remark*}{Remark}
\numberwithin{equation}{section}
\newcommand{\red}[1]{#1}
\begin{document}

%% do not remove this. --ltz
%% elsarticle forgets to reset the corref{} and thus after corref is
%% put for one author all the rest of the authors become
%% corresponding. the latex code below fixes this, it is just the
%% definition from elsarticle.cls with one line added. 
\makeatletter
\def\@author#1{\g@addto@macro\elsauthors{\normalsize%
    \def\baselinestretch{1}%
    \upshape\authorsep#1\unskip\textsuperscript{%
      \ifx\@fnmark\@empty\else\unskip\sep\@fnmark\let\sep=,\fi
      \ifx\@corref\@empty\else\unskip\sep\@corref\let\sep=,\fi
    }%
    \def\authorsep{\unskip,\space}%
    \global\let\@fnmark\@empty
    \global\let\@corref\@empty  %% Added this line to make corref empty.
    \global\let\sep\@empty}%
  \@eadauthor={#1}
}
\makeatother
%%%%%%%%%%%%%%%% do not remove this above. 

\begin{frontmatter}

\title{Adaptive Aggregation on Graphs\tnoteref{ztitlenote}}

\tnotetext[ztitlenote]{This work is supported in part by NSF through
  grants DMS-1522615, DMS-1720114, and by Lawrence Livermore National
  Lab through subcontract B614127.}
  
\author{Wenfang Xu}

\address{Department of Mathematics, The Pennsylvania State University,
  University Park, PA 16802, USA\, {Email:} wxx107@psu.edu}

\author{Ludmil T. Zikatanov\corref{cor1}}

\address{Department of Mathematics, The Pennsylvania State University,
  University Park, PA 16802, USA\, {Email:} ludmil@psu.edu\\
  Institute for Mathematics and Informatics, Bulgarian Academy of
  Sciences, Sofia, Bulgaria}

\begin{abstract}
We generalize some of the functional (hypercircle) a
posteriori estimates from finite element settings to general graphs
or Hilbert space settings. Several theoretical results in regard to
the generalized a posteriori error estimators are provided. We use
these estimates to construct aggregation based coarse spaces for
graph Laplacians. The estimator is used to assess the quality of an
aggregation adaptively. Furthermore, a reshaping based algorithm is
tested on several numerical examples.
\end{abstract}

\begin{keyword}{graph Laplacian, graph aggregation, multilevel hierarchy,
  hypercircle error estimates, matching}

\end{keyword}
\end{frontmatter}

\section{Introduction}
\red{The hypercircle identity was first introduced in
  \cite{1947PragerW_SyngeJ-aa} to study approximations to elastic
  problems.} Then the so-called hypercircle methods are established
and studied in a posteriori error estimates for finite element methods
\red{(see, for example, \cite{MR3076038}, for a comprehensive survey
  of the main results)}. Some pioneering works on the hypercircle
methods are \cite{1975Ladeveze, 1999DestuynderMetivet}.

The functional a posteriori error estimates are established in
\cite{MR2458008} for general elliptic problems defined from dual
operators between Banach and Hilbert spaces. The estimation is used,
for example, in \cite{MR2502931} to bound the conforming error in
discontinuous Galerkin approximations of elliptic problems (see also
\cite{MR2526991,MR2813186}). In \cite{MR2425501} the hypercircle
method is used to construct a posteriori error estimates for the
obstacle problem (see also~\cite{MR2095603}). The formulation of the
hypercircle identity and error estimates, however, arises naturally in
Hilbert space settings and can be applied to graph Laplacians, a fact
which we exploit in what follows.

A multilevel graph coarsening scheme based on matching (that is,
collapsing adjacent vertices into aggregates) is studied in
\cite{MR1639073} and later used in several AMG methods. For example,
the multigrid method proposed in \cite{MR1964291} uses matching on the
graph of the stiffness matrix to solve convection--diffusion
equations. The AGMG method (AGgregation-based algebraic MultiGrid) in
\cite{MR2914316} employs a pairwise aggregation algorithm by matching
which minimizes a strength function. In \cite{MR3264812} matching
techniques which optimize matrix invariants were studied. In our work
we use matching to generate multilevel hierarchies for solving the
graph Laplacian. We point out that other coarsening techniques exist
in the literature, for example the compatible relaxation algorithm
\cite{MR1760015, MR2065813, MR2114296, MR2652083}.

In this paper combinatorial graphs are considered and we are
interested in approximations of the associated graph Laplacian matrix
from coarse subspaces. We propose Raviart-Thomas-like coarsening
schemes for both the vertex and edge spaces defined on graph
aggregations. It can be shown that the functional a posteriori error
estimates naturally apply to this setting and we provide a short proof
of the estimation, inspired by the works in \cite{1975Ladeveze,
  1999DestuynderMetivet, 1947PragerW_SyngeJ-aa, MR2458008}. The
estimator is minimized by an inter-leaved method to achieve reliable
bound of the error \cite{MR2817075}. Lastly we propose a reshaping
algorithm that generates aggregations adaptively. This algorithm can
be used together with aggregation coarsening methods \cite{MR1639073}
to form multilevel hierarchies for the graph Laplacian \red{(see
  \cite{MR1835471} for an analysis on the convergence of an algebraic
  multigrid method based on smoothed aggregation)}. We point out that
this is the first multilevel hierarchy that we know of that is
dependent on the right-hand side of the system. Several numerical
experiments are given at the end.

\section{Preliminaries and notation}
In this section we introduce the notations and preliminaries. Consider
a combinatorial graph $\cG = (\cV,\cE)$, where
$\cV = \{1,2,\cdots,n\}$ is the set of vertices and $\cE$ is the set
of edges containing pairs of the form $(i,j)$, where
$i,j\in\{1,\cdots,n\}$. Let $\vsp = \mathbb{R}^{\lvert\cV\rvert}$ be
the vertex space of $\cG$ and $\esp = \mathbb{R}^{\lvert\cE\rvert}$ be
the edge space. We consider $A\in \mathbb{R}^{n\times n}$ defined via
the bilinear form
\begin{equation*}
  (Au,v) = \sum_{(i,j)\in \cE}-a_{ij}(u_i-u_j)(v_i-v_j), \quad
  \mforall u,v\in \vsp.
\end{equation*}
Here the sum runs over all edges $e=(i,j)\in \cE$. The resulting
matrix is known as the (weighted) Graph Laplacian of $\cG$. Such
bilinear forms correspond to the continuous (conforming) finite
element as well as the mixed finite element discretizations with
lumped mass of a scalar elliptic equation. In our work, we generalize
some of the results from finite element a posteriori analysis to the
case of general graphs. In particular, we are interested in good
approximations of the above bilinear form on a smaller subspace, that
is, good approximations to the solution of the problem
\begin{equation}\label{eq:problem}
  Au = f.
\end{equation}

We define operators $\Grad\colon\vsp\to\esp$ and $D\colon\esp\to\esp$
as follows.
\begin{align*}
  (\Grad v)_e & = v_{\text{head}}-v_{\text{tail}}, \quad \mforall v\in\vsp, \\
  (D\bm\tau)_e & = a_e\bm\tau_e, \quad a_e = -a_{ij}, \quad \mforall\bm\tau\in\esp,
\end{align*}
where $e=(i,j)$ is any edge of the graph. Here the ``head'' and
``tail'' are predetermined for each edge. Given $\Grad$ and $D$ we can
rewrite the graph Laplacian as $(Au,v)=(D\Grad u,\Grad v)$. If $D$ is
chosen to be the identity map we obtain the so called standard graph
Laplacian. We further denote by $\Div$ the adjoint of $\Grad$ with
respect to the $\ell^2$-inner products on $\vsp$ and $\esp$, that is,
\begin{equation*}
  (\Grad u,\bm\tau)_{\esp}=(u,\Div\bm\tau)_{\vsp},
  \quad \mforall u\in\vsp,\ \mforall\bm\tau\in\esp.
\end{equation*}

Some examples from finite element methods naturally arise in the form
of graph Laplacian.

\subsection{Example I: standard finite elements}
Let $\Omega$ be a domain in $\Reals^n$. We consider Poisson's equation
with Neumann boundary condition.
\begin{alignat*}{2}
  -\dvg(a\nabla u)     & = f, \qquad && \text{in }\Omega;\\
  \nabla u\cdot\mathbf n & = 0, \qquad && \text{on }\partial\Omega.  
\end{alignat*} 
The weak formulation of the problem is then to find $u\in H^1(\Omega)$
such that
\begin{equation*}
  (Au,v)=(f,v),\quad\mforall v\in H^1(\Omega),
\end{equation*}
where
\begin{equation*}
(Au,v)=\int_\Omega a\nabla u\nabla v\dif x.
\end{equation*}
If the discretization by continuous piecewise linear elements is
considered, $A$ has a matrix representation
\begin{equation*}
(Au,v)=\sum_{(i,j)\in \cE}-a_{ij}(u_i-u_j)(v_i-v_j).
\end{equation*}
Clearly the above bilinear form is a weighted graph Laplacian.

\subsection{Example II: mixed finite elements with lumped mass}\label{ex:II}
We consider again Poisson's equation, but in mixed formulation.
\begin{IEEEeqnarray*}{rCl'l}
  (a^{-1}\bm\sigma,\bm\tau)+(\dvg\bm\tau,u) & = & 0,      & \mforall\bm\tau\in H(\dvg,\Omega); \\
  (\dvg\bm\sigma,v)                         & = & -(f,v), & \mforall v\in L^2(\Omega).
\end{IEEEeqnarray*}
Define the operator $A$ on $L^2(\Omega)$ as
\begin{equation*}
(Au,v)=(\dvg\bm\sigma,v).
\end{equation*}
For discretization let $\esp_h\subset H(\dvg,\Omega)$ be the space of
piecewise first order polynomials with continuous normal components
across the edges, and $S_h\subset L^2(\Omega)$ be the space of
piecewise constant functions. If we use ``mass lumping'' for the term
$(a^{-1}\bm\sigma,\bm\tau)$ then we get the discretized
$A\colon$
\begin{equation*}
  (Au,v)=\sum_{e=T_+\cap T_-}a_e(u_{T_+}-u_{T_-})(v_{T_+}-v_{T_-}),
\end{equation*}
which again is a graph Laplacian.

\section{Generalized functional (hypercircle) a posteriori error
  estimates}\label{sec:functional-estimates}
Here we present two lemmata in general settings that can be used in
functional a posteriori estimates. We provide abstract formulations
and proofs of the two results so that they can be used on graphs, and
even in more general Hilbert space settings.

Let $\vsp$ and $\esp$ be Hilbert spaces and suppose
$\Grad\colon\vsp\to\esp$ is an injective operator. We assume that the
Hilbert adjoint of $\Grad$, $\Div\colon\esp\to\vsp$ defined via
\begin{equation*}
  (\Div\bm\tau,v)_\vsp=(\bm\tau,\Grad v)_\esp, \quad
  \mforall v\in\vsp,\ \mforall\bm\tau\in\esp
\end{equation*}
is surjective with closed range. Note that for finite dimensional
cases, the closed range assumption is automatically true.

Let $D\colon\esp\to\esp$ be an operator that is \red{symmetric
  positive definite on $\esp$}. Define $A \coloneqq \Div D\Grad$ which
is positive definite. The problem we are interested in is to find
$u\in\vsp$, such that
\begin{equation}\label{eq:variational}
  (Au,v)_\vsp=f(v),\quad\mforall v\in\vsp.
\end{equation}
Here $f\in \vsp'$, the dual space of $\vsp$. Since $A$ is positive
definite, there exists some $C_P$, the \Poincare's constant, such that
\begin{equation*}
  C_P\norm{v}_\vsp\leq\norm{v}_A, \quad \mforall v\in\vsp.
\end{equation*}
In what follows, we will need the space $\esp(g)$, which for a fixed
$g\in \vsp'$ is defined to be
\begin{equation}\label{eq:q}
  \esp(g) = \{ \bm\tau\in\esp \mid (\bm\tau,\Grad v)_\esp = g(v),
  \mforall v\in\vsp \}.
\end{equation}

The first result is the following lemma.
\begin{lemma}[Prager and Synge \cite{1947PragerW_SyngeJ-aa}]
  \label{lem:Prager-Synge}
  Let $u$ be the solution to \eqref{eq:variational}. Then for any
  $\bm\tau\in\esp(f)$ and any $v\in\vsp$, the following identity
  holds.
  \begin{equation}\label{eq:Prager-Synge}
    \norm{u-v}^2_A + \norm{D\Grad u-\bm\tau}^2_{D^{-1}} = \norm{D\Grad v-\bm\tau}^2_{D^{-1}}.
  \end{equation}
\end{lemma}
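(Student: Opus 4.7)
The plan is to verify the identity \eqref{eq:Prager-Synge} by a direct ``add and subtract'' expansion around $D\Grad u$. Concretely, I would write
\[
  D\Grad v - \bm\tau = D\Grad(v-u) + (D\Grad u - \bm\tau)
\]
and expand the squared $D^{-1}$-norm on the right-hand side of \eqref{eq:Prager-Synge}. This produces two squared terms together with a mixed term, so the entire argument reduces to (i) identifying one of the squared pieces with $\norm{u-v}^2_A$, and (ii) showing the mixed term vanishes.

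For step (i), the key observation is that for any $w\in\vsp$,
\[
  \norm{D\Grad w}^2_{D^{-1}} = (D\Grad w,\Grad w)_\esp = (\Div D\Grad w, w)_\vsp = \norm{w}^2_A,
\]
using the definition $A = \Div D\Grad$ and the symmetry of $D$. Applied with $w = v-u$ this accounts for one of the left-hand terms of \eqref{eq:Prager-Synge}.

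Step (ii) is the crux. Using $(\cdot,\cdot)_{D^{-1}} = (D^{-1}\cdot,\cdot)_\esp$, the mixed term reads
\[
  2\,(D\Grad(v-u),\,D\Grad u - \bm\tau)_{D^{-1}} = 2\bigl[(\Grad(v-u),D\Grad u)_\esp - (\Grad(v-u),\bm\tau)_\esp\bigr].
\]
I would then show the two inner products are equal: the first because $u$ solves \eqref{eq:variational}, so pushing $\Div$ across yields $(\Grad w,D\Grad u)_\esp = (w,Au)_\vsp = f(w)$ for every $w\in\vsp$; the second directly from the definition \eqref{eq:q} of $\esp(f)$, which states $(\bm\tau,\Grad w)_\esp = f(w)$. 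Setting $w = v-u$ and subtracting, the mixed term is zero, and rearranging gives \eqref{eq:Prager-Synge}.

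I do not anticipate a serious obstacle here, since the identity is a purely algebraic consequence of two variational characterizations of $f$ and requires no inequalities, no \Poincare{} constant, and no use of the closed-range or surjectivity assumptions on $\Div$. The only place to be careful is bookkeeping of the $D^{-1}$-weighted versus the standard $\esp$-inner product when moving $D$ and $D^{-1}$ from one slot to the other, so that the two occurrences of $f(v-u)$ cleanly cancel.
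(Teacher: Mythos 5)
Your proof is correct, and it rests on exactly the same two ingredients as the paper's proof --- the variational equation \eqref{eq:variational} to evaluate $(\Grad w,D\Grad u)_\esp=f(w)$, and the definition \eqref{eq:q} of $\esp(f)$ to evaluate $(\bm\tau,\Grad w)_\esp=f(w)$ --- but you organize the computation differently. The paper expands both $\norm{D\Grad v-\bm\tau}^2_{D^{-1}}$ and $\norm{D\Grad u-\bm\tau}^2_{D^{-1}}$ term by term, converts the cross terms to $f(v)$ and $f(u)$, rewrites these as $(Au,v)_\vsp$ and $(Au,u)_\vsp$, and recognizes what remains as $\norm{u-v}_A^2$. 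You instead decompose $D\Grad v-\bm\tau=D\Grad(v-u)+(D\Grad u-\bm\tau)$ and show the mixed term vanishes, which exhibits the identity as a Pythagorean theorem: for $\bm\tau\in\esp(f)$ the residual flux $D\Grad u-\bm\tau$ is $D^{-1}$-orthogonal to the subspace $D\Grad(\vsp)$. This is the classical hypercircle viewpoint and is arguably more transparent about \emph{why} the identity holds, whereas the paper's version reaches the same cancellation ($f(v)-f(u)$ versus your $f(v-u)$) by brute-force expansion; the two are algebraically equivalent. Your closing observations are also accurate: this lemma uses neither the \Poincare{} constant nor the surjectivity and closed-range assumptions on $\Div$, which enter only in Lemma \ref{lem:Repin}.
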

\begin{proof}
  We have the following from the definition of $A$, as well as
  \eqref{eq:variational} and \eqref{eq:q}.
  \begin{align*}
    & \norm{D\Grad v-\bm\tau}^2_{D^{-1}}-\norm{D\Grad u-\bm\tau}^2_{D^{-1}}
      = (D\Grad v,D\Grad v)_{D^{-1}}-2(\Grad v,\bm\tau)_\esp+(\bm\tau,\bm\tau)_{D^{-1}} \\
    & \hspace{7ex} -(D\Grad u,D\Grad u)_{D^{-1}}+2(\Grad u,\bm\tau)_\esp-(\bm\tau,\bm\tau)_{D^{-1}} \\
    & \hspace{4ex} = \norm{v}_A^2-2f(v)-\norm{u}_A^2+2f(u)=\norm{v}_A^2-2(Au,v)_\vsp-\norm{u}_A^2+2(Au,u)_\vsp \\
    & \hspace{4ex} = \norm{v}_A^2-2(Au,v)_\vsp-\norm{u}_A^2+2\norm{u}_A^2=\norm{u-v}_A^2. \qedhere
  \end{align*}
\end{proof}

We may take $v\in\vsp$ to be an approximation to the solution $u$ in
\eqref{eq:variational}. Using the identity from lemma
\ref{lem:Prager-Synge} we can obtain the following important result
(S. Repin \cite{MR2458008}, Ladev{\`e}ze \cite{1975Ladeveze} and
Destuynder and M\'etivet \cite{1999DestuynderMetivet}).
\begin{lemma}\label{lem:Repin}
  Let $u$ be the solution to the variational problem
  \eqref{eq:variational}. Assume that $\bm\phi\in\esp$ is
  arbitrary. Then the following inequality holds for all $v\in\vsp$.
  \begin{equation}\label{eq:functional-estimate}
    \norm{u-v}_A \leq \norm{D\Grad  v-\bm\phi}_{D^{-1}} + C_P^{-1}\norm{\Grad ^*\bm\phi-f}_\vsp.
  \end{equation}
\end{lemma}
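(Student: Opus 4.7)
The plan is to reduce this estimate to the Prager--Synge identity of Lemma~\ref{lem:Prager-Synge} by constructing a suitable $\bm\tau\in\esp(f)$ starting from the arbitrary $\bm\phi\in\esp$, then using the triangle inequality in the $D^{-1}$-norm. Since Lemma~\ref{lem:Prager-Synge} immediately gives $\norm{u-v}_A \leq \norm{D\Grad v - \bm\tau}_{D^{-1}}$ whenever $\bm\tau\in\esp(f)$, and since by the triangle inequality
\begin{equation*}
  \norm{D\Grad v - \bm\tau}_{D^{-1}} \leq \norm{D\Grad v - \bm\phi}_{D^{-1}} + \norm{\bm\phi - \bm\tau}_{D^{-1}},
\end{equation*}
the whole problem reduces to exhibiting a $\bm\tau\in\esp(f)$ for which $\norm{\bm\phi-\bm\tau}_{D^{-1}}$ is controlled by $C_P^{-1}\norm{\Grad^*\bm\phi - f}_\vsp$.

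To build such a $\bm\tau$, I would look for a correction of the form $\bm\tau = \bm\phi + D\Grad w$ with $w\in\vsp$. The condition $\bm\tau\in\esp(f)$ is equivalent to $\Grad^*\bm\tau = f$ (viewing $f$ via Riesz in the finite-dimensional setting, which is the case of interest), which becomes $\Grad^* D \Grad w = f - \Grad^*\bm\phi$, i.e.\ $A w = f - \Grad^*\bm\phi$. Since $A$ is positive definite (hence invertible), a unique such $w$ exists. With this choice,
\begin{equation*}
  \norm{\bm\phi-\bm\tau}_{D^{-1}}^2 = \norm{D\Grad w}_{D^{-1}}^2 = (D\Grad w,\Grad w)_\esp = \norm{w}_A^2.
\end{equation*}

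It remains to estimate $\norm{w}_A$. Using $Aw = f - \Grad^*\bm\phi$ together with Cauchy--Schwarz and the Poincar\'e-type inequality $C_P\norm{w}_\vsp \leq \norm{w}_A$,
\begin{equation*}
  \norm{w}_A^2 = (Aw,w)_\vsp = (f-\Grad^*\bm\phi,w)_\vsp \leq \norm{f-\Grad^*\bm\phi}_\vsp\,\norm{w}_\vsp \leq C_P^{-1}\norm{\Grad^*\bm\phi-f}_\vsp\,\norm{w}_A,
\end{equation*}
so $\norm{\bm\phi-\bm\tau}_{D^{-1}} = \norm{w}_A \leq C_P^{-1}\norm{\Grad^*\bm\phi-f}_\vsp$. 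Combining this with Prager--Synge and the triangle inequality above yields \eqref{eq:functional-estimate}.

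The only nontrivial step is the construction of the correction $\bm\tau$; the rest is Cauchy--Schwarz and the Poincar\'e bound. In the infinite-dimensional case one has to be a little careful because $f$ lives in $\vsp'$ and the correction $w$ must be produced by the solvability of $Aw = f - \Grad^*\bm\phi$, which is exactly why we assumed $\Div$ is surjective with closed range (making $A\colon\vsp\to\vsp'$ an isomorphism). Once that subtlety is handled, the argument goes through verbatim.
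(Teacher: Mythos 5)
Your proof is correct and follows essentially the same route as the paper's: Prager--Synge plus the triangle inequality, followed by constructing the corrected flux $\bm\tau = \bm\phi + D\Grad w$ with $Aw = f - \Grad^*\bm\phi$ (the paper's auxiliary function $z$) and bounding $\norm{w}_A$ via Cauchy--Schwarz and the \Poincare{} inequality. The paper also records, in a remark, a shorter duality-based argument expanding $\norm{u-v}_A = \sup_{\norm{z}_A=1}(A(u-v),z)_\vsp$, but your argument matches the main proof.
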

\begin{proof}
  By lemma \ref{lem:Prager-Synge}, for any $\bm\tau\in\esp(f)$ we have
  that
  \begin{equation*}
    \norm{u-v}^2_A\leq\norm{u-v}^2_A+\norm{D\Grad u-\bm\tau}^2_{D^{-1}}=\norm{D\Grad v-\bm\tau}^2_{D^{-1}}.
  \end{equation*}
  The triangle inequality then gives, for any $\bm\phi\in\esp$,
  \begin{equation*}
    \norm{u-v}_A\leq\norm{D\Grad v-\bm\phi}_{D^{-1}}+\norm{\bm\phi-\bm\tau}_{D^{-1}}.
  \end{equation*} 
  Because the inequality holds for all $\bm\tau\in\esp(f)$, we can take
  the infimum with respect to $\bm\phi$ and get
  \begin{equation*}
    \norm{u-v}_A\leq\norm{D\Grad v-\bm\phi}_{D^{-1}}+\inf_{\bm\tau\in\esp(f)}\norm{\bm\phi-\bm\tau}_{D^{-1}}.
  \end{equation*}
  We now estimate the last term on the right hand side. Let $z\in\vsp$
  be such that
  \begin{equation}\label{eq:auxiliary}
    (Az,w)_\vsp=f(w)-(\bm\phi,\Grad w)_\esp, \quad \mforall w\in \vsp.
  \end{equation}
  Since $A$ is symmetric positive definite such $z$ is guaranteed to
  exist. Define $\widetilde{\bm\tau}=D\Grad z+\bm\phi\in\esp$. We then
  have, by that $A=\Div D\Grad $ and \eqref{eq:auxiliary},
  \begin{equation*}
    (\widetilde{\bm\tau},\Grad w)_\esp = (D\Grad z,\Grad w)_\esp +
    (\bm\phi,\Grad w)_\esp = (Az,w)_\vsp + (\bm\phi,\Grad w)_\esp = f(w).
  \end{equation*} 
  This proves that $\widetilde{\bm\tau}\in\esp(f)$. Hence
  \begin{equation*}
    \inf_{\bm\tau\in\esp(f)}\norm{\bm\phi-\bm\tau}_{D^{-1}}\leq\norm{\bm\phi-\widetilde{\bm\tau}}_{D^{-1}}=
    \norm{D\Grad z}_{D^{-1}}=\norm{z}_A.
  \end{equation*}
  On the other hand, we apply \eqref{eq:auxiliary} again to obtain
  \begin{align*}
    \norm{z}_A & = \frac{\norm{z}^2_A}{\norm{z}_A} = \frac{(Az,z)_\vsp}{\norm{z}_A}
                 = \frac{(f-\Div\bm\phi,z)_\vsp}{\norm{z}_A} \\
               & \leq\norm{f-\Div\bm\phi}_\vsp\frac{\norm{z}_\vsp}{\norm{z}_A} \leq
                 C_P^{-1}\norm{\Div\bm\phi-f}_\vsp,
  \end{align*} 
  which concludes the proof.
\end{proof}

\red{
\begin{remark*}
  Although the original proof was found in the aforementioned papers,
  the anonymous referee provided a shorter and inspiring proof. We
  present it here. Since $A$ is symmetric positive definite,
  $\ip[A]{\cdot}{\cdot}$ defines an inner product. We get
  \begin{align*}
    \lVert u-v \rVert_A
    & = \sup_{\lVert z\rVert_A = 1}(u-v,z)_A
      = \sup_{\lVert z\rVert_A = 1}(A(u-v),z)_\vsp
      = \sup_{\lVert z\rVert_A = 1}(f-\Grad^*D\Grad v,z)_\vsp \\
    & = \sup_{\lVert z\rVert_A = 1}\{(f-G^*\bm\phi,z)_\vsp
      + (G^*(\bm\phi-DGv),z)_\vsp\}.
  \end{align*}
  We have for the first term
  \begin{equation*}
    (f-G^*\bm\phi,z)_\vsp \leq \norm{f-G^*\bm\phi}_\vsp\norm{z}_\vsp
    \leq \norm{f-G^*\bm\phi}_\vsp\cdot C_P^{-1}\norm{z}_A
    = C_P^{-1}\norm{G^*\bm\phi-f}_\vsp,
  \end{equation*}
  and for the second term
  \begin{align*}
    (G^*(\bm\phi-DGv),z)_\vsp
    & = (\bm\phi-DGv,Gz)_\esp = \ip[D]{D^{-1}(\bm\phi-DGv)}{Gz} \\
    & \leq \norm{D^{-1}(\bm\phi-DGv)}_D \lVert Gz\rVert_D
      = \norm{\bm\phi-DGv}_{D^{-1}} \lVert z\rVert_A.
  \end{align*}
  This proves the result.
\end{remark*}
}

We can minimize the right-hand side of \eqref{eq:functional-estimate}
with respect to $\bm\phi$ to get an estimate of the error. \red{(See
  \cite{ern:hal-01377007} for a treatment of the continuous version of
  the minimization problem via $H(\dvg)$-liftings.)} Denote the
right-hand side of \eqref{eq:functional-estimate} by
\begin{equation*}
  \eta(\bm\phi) = \norm{D\Grad v-\bm\phi}_{D^{-1}}
  + C_P^{-1}\norm{\Div\bm\phi-f}_\vsp.
\end{equation*}
Then the minimization process of $\eta(\bm\phi)$ is described as
follows.

Using the inequality $(a+b)^2\leq(1+\beta)a^2+(1+1/\beta)b^2$ for any
positive $\beta$ one obtains
\begin{equation*}
  \eta^2(\bm\phi) \leq E(\beta,\bm\phi),
\end{equation*}
where
\begin{equation*}
  E(\beta,\bm\phi) \coloneqq (1+\beta)\norm{D\Grad v-\bm\phi}_{D^{-1}}^2 +
  (1+1/\beta)C_P^{-2}\norm{\Div\bm\phi-f}^2_\vsp.
\end{equation*}
An inter-leaved process \cite{MR2817075} can be applied to handle the
minimization of $E$, with respect to $\bm\phi$ and $\beta$
repeatedly. We present the following lemmata that are used for
minimization.

\begin{lemma}\label{lemma:minimizing-phi}
  Let $\beta > 0$ be fixed and let
  \begin{equation*}
    a_1 = 1+\beta, \quad a_2 = (1+1/\beta)C_p^{-2}.
  \end{equation*}
  Then $E(\beta,\bm\phi)$ attains a unique minimum when $\bm\phi$ is
  the solution to
  \begin{equation}\label{eq:minimizing-phi}
    \left(a_1D^{-1}+a_2\Grad \Div\right)\bm\phi=\Grad (a_1v+a_2f).
  \end{equation}
\end{lemma}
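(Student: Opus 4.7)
The plan is to recognize $E(\beta,\bm\phi)$ as a strictly convex quadratic functional in $\bm\phi$ (with $\beta$ held fixed), compute its first variation, and obtain \eqref{eq:minimizing-phi} as the corresponding Euler equation. Strict convexity, in turn, will furnish uniqueness of the minimizer.

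First, I would expand
\begin{equation*}
  E(\beta,\bm\phi) = a_1\ip[\esp]{D^{-1}(D\Grad v-\bm\phi)}{D\Grad v-\bm\phi}
  + a_2\ip[\vsp]{\Div\bm\phi-f}{\Div\bm\phi-f},
\end{equation*}
and compute the Gâteaux derivative in an arbitrary direction $\bm\psi\in\esp$. This yields
\begin{equation*}
  \frac{d}{dt}\bigg|_{t=0}E(\beta,\bm\phi+t\bm\psi)
  = -2a_1\ip[\esp]{D^{-1}(D\Grad v-\bm\phi)}{\bm\psi}
  + 2a_2\ip[\vsp]{\Div\bm\phi-f}{\Div\bm\psi}.
\end{equation*}
Since $\Div=\Grad^*$ with respect to the inner products on $\vsp$ and $\esp$, the second term becomes $2a_2\ip[\esp]{\Grad(\Div\bm\phi-f)}{\bm\psi}$ (identifying $f\in\vsp'$ with its Riesz representative in $\vsp$). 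Setting the derivative to zero for all $\bm\psi\in\esp$ produces the identity
\begin{equation*}
  a_1D^{-1}\bm\phi + a_2\Grad\Div\bm\phi = a_1\Grad v + a_2\Grad f,
\end{equation*}
which is precisely \eqref{eq:minimizing-phi}.

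For the uniqueness of the minimizer I would observe that the quadratic form associated with $E(\beta,\cdot)$ has Hessian $2(a_1D^{-1}+a_2\Grad\Div)$. Since $D$ is symmetric positive definite on $\esp$, so is $D^{-1}$; moreover $\Grad\Div=\Grad\Grad^*$ is symmetric positive semi-definite. With $a_1,a_2>0$, the sum $a_1D^{-1}+a_2\Grad\Div$ is symmetric positive definite, so $E(\beta,\cdot)$ is strictly convex and coercive, guaranteeing a unique minimizer characterized by \eqref{eq:minimizing-phi}.

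The only mildly delicate point is the manipulation of $\Div\bm\psi$ via the adjoint relation and the identification of $f\in\vsp'$ with an element of $\vsp$ (valid since $\vsp$ is a Hilbert space); once that is in place, everything reduces to routine calculus on a quadratic functional, and no further technicalities arise.
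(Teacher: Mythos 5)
Your proposal is correct and follows essentially the same route as the paper: expand $E(\beta,\cdot)$ as a quadratic in $\bm\phi$, compute the directional (G\^ateaux) derivative, move $\Div$ onto $\bm\psi$ via the adjoint relation, and read off \eqref{eq:minimizing-phi} as the Euler equation. Your explicit verification that $a_1D^{-1}+a_2\Grad\Div$ is symmetric positive definite is a slightly more careful justification of uniqueness than the paper's appeal to convexity alone, but it is not a different argument.
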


\begin{proof}
  We have that
  \begin{equation*}
    E(\beta,\bm\phi) = a_1\norm{\bm\phi-D\Grad v}_{D^{-1}}^2 + a_2\norm{\Div\bm\phi-f}_\vsp^2.
  \end{equation*}
  Since $E$ is convex in $\bm\phi$, $E$ attains minimum if and only if
  the directional derivative of $E$ with respect to $\bm\phi$ is zero
  at some point. For any fixed $\bm\phi$, take a small variation in
  the direction of $\bm\chi$ we get the directional derivative
  \begin{equation*}
    \bm\chi \mapsto \left.\frac{\dif}{\dif\varepsilon}\right\vert_{\varepsilon=0}E(\beta,\bm\phi+\varepsilon\bm\chi),
  \end{equation*}
  where
  \begin{align*}
    & \hspace{1em} \left.\frac{\dif}{\dif\varepsilon}\right\vert_{\varepsilon=0}
      E(\beta,\bm\phi+\varepsilon\bm\chi)
      = \left.\frac{\dif}{\dif\varepsilon}\right\vert_{\varepsilon=0}
      \Big[ a_1\ip[D^{-1}]{\bm\phi+\varepsilon\bm\chi-D\Grad v}{\bm\phi+\varepsilon\bm\chi-D\Grad v} \\
    & \hspace{11em} + a_2\ip[\vsp]{\Div(\bm\phi+\varepsilon\bm\chi)-f}{\Div(\bm\phi+\varepsilon\bm\chi)-f} \Big] \\
    & = \left.\frac{\dif}{\dif\varepsilon}\right\vert_{\varepsilon=0}
      \Big[ a_1\left( \varepsilon^2\ip[D^{-1}]{\bm\chi}{\bm\chi}
      + 2\varepsilon\ip[D^{-1}]{\bm\chi}{\bm\phi-D\Grad v}
      + \ip[D^{-1}]{\bm\phi-D\Grad v}{\bm\phi-D\Grad v}\right) \\
    & \hspace{4em} + a_2\left(\varepsilon^2\ip[\vsp]{\Div\bm\chi}{\Div\bm\chi}
      + 2\varepsilon\ip[\vsp]{\Div\bm\chi}{\Div\bm\phi-f}
      + \ip[\vsp]{\Div\bm\phi-f}{\Div\bm\phi-f}\right) \Big]\\
    & = 2a_1 \ip[D^{-1}]{\bm\chi}{\bm\phi-D\Grad v} +
      2a_2 \ip{\Div\bm\chi}{\Div\bm\phi-f}_\vsp \\
    & = 2a_1 \ip{\bm\chi}{D^{-1}\bm\phi-\Grad v}_{\esp} +
      2a_2 \ip{\bm\chi}{\Grad\Div\bm\phi-\Grad f}_{\esp} \\
    & = 2 \ip[\esp]{\bm\chi}
      {\left(a_1D^{-1}+a_2\Grad\Div\right)\bm\phi-\Grad (a_1v+a_2f)}.
  \end{align*}
  Setting this to be zero we get equation \eqref{eq:minimizing-phi}.
\end{proof}

\begin{lemma}\label{lemma:minimizing-beta}
  Let $\bm\phi$ be fixed such that
  \begin{equation*}
    b_1=\|D\Grad v-\bm\phi\|_{D^{-1}} \;\;\text{and} \quad
    b_2 = C_P^{-1}\|\Div\bm\phi-f\|_\vsp
  \end{equation*}
  are nonzero. Then
  \begin{equation}\label{eq:minimizing-beta}
    \argmin_{\beta}E(\beta,\bm\phi) = \frac{b_2}{b_1}.
  \end{equation}
\end{lemma}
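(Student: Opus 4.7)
The plan is to freeze $\bm\phi$ and reduce $E(\beta,\bm\phi)$ to a function of the single scalar variable $\beta\in(0,\infty)$. Substituting the abbreviations $b_1$ and $b_2$ into the definition of $E$ gives
\begin{equation*}
  E(\beta,\bm\phi) = (1+\beta)b_1^2 + (1+1/\beta)b_2^2
  = b_1^2 + b_2^2 + \beta\,b_1^2 + \frac{b_2^2}{\beta},
\end{equation*}
so minimizing $E$ in $\beta$ amounts to minimizing $g(\beta) \coloneqq \beta\,b_1^2 + b_2^2/\beta$ on $(0,\infty)$.

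Next I would carry out the one-variable calculus. Differentiating yields $g'(\beta) = b_1^2 - b_2^2/\beta^2$, whose unique positive root is $\beta^* = b_2/b_1$, which is well-defined since the hypothesis guarantees $b_1,b_2>0$. The second derivative $g''(\beta) = 2b_2^2/\beta^3 > 0$ shows that $g$ is strictly convex on $(0,\infty)$, so $\beta^*$ is the unique global minimizer, giving \eqref{eq:minimizing-beta}. Alternatively, AM--GM applied to the two positive summands $\beta\,b_1^2$ and $b_2^2/\beta$ yields $g(\beta)\geq 2 b_1 b_2$ with equality precisely when $\beta\,b_1^2 = b_2^2/\beta$, i.e.\ $\beta = b_2/b_1$; this avoids calculus entirely and might be more elegant to present.

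There is no real obstacle here: the nonvanishing assumption on $b_1$ and $b_2$ is exactly what is needed to ensure that neither endpoint $\beta\to 0^+$ nor $\beta\to\infty$ competes with the interior critical point (both send $g$ to $+\infty$), and to guarantee that the optimum $b_2/b_1$ is a finite positive number. The only thing to be a little careful about is to justify that a minimum exists and is attained in the open interval $(0,\infty)$ rather than approached at the boundary, which is immediate from the two limits above combined with continuity.
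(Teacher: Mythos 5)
Your proposal is correct and, in its AM--GM variant, is exactly the argument the paper gives: the paper writes $E(\beta,\bm\phi)=b_1^2+b_2^2+\beta b_1^2+b_2^2/\beta\geq (b_1+b_2)^2$ with equality iff $\beta=b_2/b_1$. The calculus version you lead with is an equivalent routine verification, so there is nothing substantively different here.
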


\begin{proof}
  We have that
  \begin{align*}
    E(\beta,\bm\phi) & = (1+\beta)b_1^2+(1+1/\beta)b_2^2 = b_1^2+b_2^2+b_1^2\beta+\frac{b_2^2}{\beta} \\
                     & \geq b_1^2+b_2^2+2\sqrt{b_1^2\beta\cdot\frac{b_2^2}{\beta}} = (b_1+b_2)^2,
  \end{align*}
  where the equal sign holds if and only if
  $\beta = \dfrac{b_2}{b_1}$, giving equation
  \eqref{eq:minimizing-beta}.
\end{proof}

The minimization of $E$ is done by repeatedly minimizing
$E(\beta,\bm\phi)$ with respect to $\bm\phi$ and $\beta$ until $E$
converges. In fact, this process also gives the global minimum of
$\eta(\bm\phi)$. We have the following lemma.

\begin{lemma}
  If $E(\beta,\bm\phi)$ attains a local minimum at
  $(\beta_0,\bm\phi_0)$, then $\eta(\bm\phi)$ attains a global minimum
  at $\bm\phi_0$.
\end{lemma}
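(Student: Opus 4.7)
The plan is to use the partial-minimization relation $\min_{\beta>0}E(\beta,\bm\phi)=\eta^2(\bm\phi)$ implicit in the preceding two lemmata, together with the convexity of $\eta$, to upgrade a local min of $E$ into a global min of $\eta$.

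First I would pin down the $\beta$-coordinate of the local minimizer. For fixed $\bm\phi$, the function $\beta\mapsto E(\beta,\bm\phi)$ is strictly convex on $(0,\infty)$, so any local minimum in $\beta$ is the unique global minimum. Combined with Lemma \ref{lemma:minimizing-beta}, this forces $\beta_0=b_2(\bm\phi_0)/b_1(\bm\phi_0)$, where $b_1(\bm\phi)\coloneqq\norm{D\Grad v-\bm\phi}_{D^{-1}}$ and $b_2(\bm\phi)\coloneqq C_P^{-1}\norm{\Div\bm\phi-f}_\vsp$. Substituting back, the AM--GM computation from the proof of Lemma \ref{lemma:minimizing-beta} gives $E(\beta_0,\bm\phi_0)=(b_1(\bm\phi_0)+b_2(\bm\phi_0))^2=\eta^2(\bm\phi_0)$, and the same inequality shows $E(\beta,\bm\phi)\ge\eta^2(\bm\phi)$ for every admissible pair.

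Next I would promote the local min of $E$ to a local min of $\eta^2$. Define $\beta_\star(\bm\phi)\coloneqq b_2(\bm\phi)/b_1(\bm\phi)$ on the open set where $b_1>0$. Since $b_1,b_2$ are continuous in $\bm\phi$ and $b_1(\bm\phi_0)>0$ (otherwise $E(\cdot,\bm\phi_0)$ would have no interior minimizer in $\beta$, contradicting the local min hypothesis), the map $\beta_\star$ is continuous at $\bm\phi_0$ with $\beta_\star(\bm\phi_0)=\beta_0$. For $\bm\phi$ sufficiently close to $\bm\phi_0$, the pair $(\beta_\star(\bm\phi),\bm\phi)$ therefore lies in the neighborhood on which $(\beta_0,\bm\phi_0)$ is a minimizer of $E$, and
\begin{equation*}
  \eta^2(\bm\phi) \;=\; E(\beta_\star(\bm\phi),\bm\phi) \;\ge\; E(\beta_0,\bm\phi_0) \;=\; \eta^2(\bm\phi_0).
\end{equation*}
Finally, $\eta(\bm\phi)$ is a sum of two norms of affine functions of $\bm\phi$, hence convex; a convex function that attains a local minimum attains a global one there, so $\bm\phi_0$ is a global minimizer of $\eta$.

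The main obstacle is the degeneracy at $b_1(\bm\phi_0)=0$, which is why I would first verify that such a $\bm\phi_0$ cannot be a local min of $E$ on the admissible set $\beta>0$: if $b_1(\bm\phi_0)=0$ and $b_2(\bm\phi_0)>0$ then $E(\beta,\bm\phi_0)=(1+1/\beta)b_2(\bm\phi_0)^2$ is strictly decreasing in $\beta$, while if $b_2(\bm\phi_0)=0$ as well then $\eta(\bm\phi_0)=0$ and global minimality is trivial. Ruling out this degeneracy makes the continuity argument for $\beta_\star$ legitimate and closes the proof.
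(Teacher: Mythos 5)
Your proof is correct, but it takes a genuinely different route from the paper's. The paper argues through first-order conditions: it computes the directional (G\^ateaux) derivative of $\eta$ at $\bm\phi_0$ explicitly, then uses the Euler equations from Lemmas \ref{lemma:minimizing-phi} and \ref{lemma:minimizing-beta} (with $a_1=(b_1+b_2)/b_1$, $a_2=(b_1+b_2)/(C_P^2b_2)$) to show that this derivative vanishes, and finally invokes convexity of $\eta$ to upgrade stationarity to global minimality. You instead use the marginal-function identity $\inf_{\beta>0}E(\beta,\bm\phi)=\eta^2(\bm\phi)$, attained at $\beta_\star(\bm\phi)=b_2(\bm\phi)/b_1(\bm\phi)$, and a purely topological continuity argument to transfer local minimality of $E$ to local minimality of $\eta^2$, then finish with the same convexity step. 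Your route buys something real: it avoids differentiating $\eta$ altogether, which is cleaner since $\eta$ fails to be differentiable exactly where $b_1$ or $b_2$ vanishes (a degeneracy the paper's computation silently assumes away by dividing by $\norm{D\Grad v-\bm\phi_0}_{D^{-1}}$ and $\norm{\Div\bm\phi_0-f}_\vsp$), and it makes transparent \emph{why} the lemma is true (partial minimization preserves minimizers). The paper's route, on the other hand, produces the explicit stationarity equation for $\eta$, which is what one actually solves numerically. One small omission in your degeneracy analysis: you rule out $b_1(\bm\phi_0)=0$, but not the symmetric case $b_2(\bm\phi_0)=0$ with $b_1(\bm\phi_0)>0$; there $E(\beta,\bm\phi_0)=(1+\beta)b_1^2(\bm\phi_0)+b_2^2(\bm\phi_0)$ is strictly increasing in $\beta$ (and merely affine, not strictly convex, in $\beta$), so again no interior local minimum exists and the case is vacuous --- the same one-line argument you already use, so this is easily repaired and does not affect the validity of the proof.
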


\begin{proof}
  Since $\eta(\bm\phi)$ is also a convex function, $\eta(\bm\phi)$
  attains a global minimum if its directional derivative is zero at
  some $\bm\phi$. The directional derivative of $\eta(\bm\phi)$ at
  $\bm\phi_0$ is
  \begin{align*}
    & \left.\frac{\dif}{\dif\varepsilon}\right\vert_{\varepsilon=0}
      \eta(\bm\phi_0+\varepsilon\bm\chi)
      = \left.\frac{\dif}{\dif\varepsilon}\right\vert_{\varepsilon=0}
      \left[ \ip[D^{-1}]{\bm\phi_0+\varepsilon\bm\chi-D\Grad v}{\bm\phi_0+\varepsilon\bm\chi-D\Grad v}^{\half} \right. \\
    & \hspace{13.5em} + \left.C_P^{-1}\ip[\vsp]{\Div(\bm\phi_0+\varepsilon\bm\chi)-f}{\Div(\bm\phi_0+\varepsilon\bm\chi)-f}^{\half} \right] \\
    & \hspace{3em} = \frac{1}{2\norm{DGv-\bm\phi_0}_{D^{-1}}}\left.\frac{\dif}{\dif\varepsilon}\right\vert_{\varepsilon=0}
      \ip[D^{-1}]{\bm\phi_0+\varepsilon\bm\chi-D\Grad v}{\bm\phi_0+\varepsilon\bm\chi-D\Grad v} \\
    & \hspace{4em} + \frac{1}{2C_P\norm{\Div\bm\phi_0-f}_\vsp}\left.\frac{\dif}{\dif\varepsilon}\right\vert_{\varepsilon=0}
      \ip[\vsp]{\Div(\bm\phi_0+\varepsilon\bm\chi)-f}{\Div(\bm\phi_0+\varepsilon\bm\chi)-f}\\
    & \hspace{3em} = \frac{1}{\norm{DGv-\bm\phi_0}_{D^{-1}}} \ip[\esp]{\bm\chi}{D^{-1}\bm\phi_0-\Grad v} +
      \frac{1}{C_P\norm{\Div\bm\phi_0-f}_\vsp} \ip[\esp]{\bm\chi}{\Grad\Div\bm\phi_0-\Grad f}.
  \end{align*}
  Let $b_1 = \norm{DGv-\bm\phi_0}_{D^{-1}}$ and $b_2 = C_P^{-1}\norm{\Div\bm\phi_0-f}_\vsp$, then
  \begin{align}
    \left.\frac{\dif}{\dif\varepsilon}\right\vert_{\varepsilon=0}
    \eta( & \bm\phi_0 +\varepsilon\bm\chi)
      = \frac{1}{b_1} \ip[\esp]{\bm\chi}{D^{-1}\bm\phi_0-\Grad v} +
      \frac{1}{C_P^2b_2} \ip[\esp]{\bm\chi}{\Grad\Div\bm\phi_0-\Grad f} \nonumber\\
    & = \ip[\esp]{\bm\chi}{\left(\frac{1}{b_1}D^{-1}+\frac{1}{C_P^2b_2}\Grad\Div\right)\bm\phi_0
      - \Grad\left(\frac{1}{b_1}v+\frac{1}{C_P^2b_2}f\right)}. \label{eq:derivative-eta}
  \end{align}

  Since $\beta_0$ minimizes $E(\beta,\bm\phi_0)$, by lemma
  \ref{lemma:minimizing-beta}, it must be that
  $\beta_0 = \dfrac{b_2}{b_1}$. By lemma \ref{lemma:minimizing-phi}
  and the fact that $\bm\phi_0$ minimizes $E(\beta_0,\bm\phi)$, we
  have
  \begin{equation}\label{eq:minimizing-phi2}
    \left(a_1D^{-1}+a_2\Grad\Div\right)\bm\phi_0 = \Grad(a_1v+a_2f),
  \end{equation}
  where $a_1 = 1+\beta_0 = \dfrac{b_1+b_2}{b_1}$, and
  $a_2 = (1+1/\beta_0)C_P^{-2} = \dfrac{b_1+b_2}{C_P^2b_2}$. Thus
  \eqref{eq:minimizing-phi2} becomes
  \begin{equation*}
    \left(\frac{1}{b_1}D^{-1}+\frac{1}{C_P^2b_2}\Grad\Div\right)\bm\phi_0
    = \Grad\left(\frac{1}{b_1}v+\frac{1}{C_P^2b_2}f\right).
  \end{equation*}
  Comparing this with \eqref{eq:derivative-eta}, we get
  $\displaystyle\left.\frac{\dif}{\dif\varepsilon}\right\vert_{\varepsilon=0}
  \eta(\bm\phi_0+\varepsilon\bm\chi) = 0$, which completes the proof.
\end{proof}

When $\esp$ is of very high dimension, for example in our application,
where $\esp$ is the edge space of a large graph, the minimization
becomes time consuming. We handle this by varying $\bm\phi$ over only
a subspace of $\esp$ (say, the subspaces $W_H$ defined in section
\ref{sec:coarsening}). It is not hard to show that the above
minimization results also hold when $\bm\phi$ is chosen from a
subspace of $\esp$.

\section{Coarsening through aggregation}
\label{sec:coarsening}
In this section we introduce the constructions of coarse subspaces of
the vertex space $\vsp$ and edge space $\esp$ in which approximations
of the Laplacian are considered. In what follows we omit the
subscripts on the inner products as it will always be clear from the
context in which spaces the inner products are taken. An aggregation
on a graph $\cG = (\cV, \cE)$ is a splitting of the vertex set $\cV$
into non-overlapping subsets, each of which is connected in $\cG$ and
called an aggregate:
\begin{equation*}
  \cV = \bigcup_{k=1}^{n_c}\cV_{\cA_k} = \bigcup_{\cA}\cV_{\cA}.
\end{equation*}
Here an aggregate is denoted by $\cA$ and $\cV_{\cA}$ is its set of
vertices. This splitting naturally splits the set of edges into
interior and interface edges. For each aggregate $\cA$, we denote by
$\cE_{\cA}$ the set of interior edges of $\cA$.
\begin{equation*}
  \cE_{\cA} = \{(i,j)\in\cE \mid i, j \in \cV_{\cA}\}.
\end{equation*}
For any two aggregates $\cA$ and $\cA'$, denote by $\cI_{\cA\cA'}$ the
set of interface edges between them, that is, the set of edges which
connect $\cA$ and $\cA'\colon$
\begin{equation*}
  \cI_{\cA\cA'} = \{(i,j)\in\cE \mid i\in\cV_{\cA}, j\in\cV_{\cA'}\}.
\end{equation*}
Note edges in the graph are undirected and $(i,j)$ and $(j,i)$ are
considered to be the same in $\cE$. We denote by $\Gamma$ the set of
all interfaces in the aggregation.

An aggregation of $\cG$ is completely characterized by the subspace of
$\vsp$ consisting of vectors taking one and the same value at all
vertices in an aggregate. We denote this subspace by $\vsp_H$ as
follows.
\begin{equation*}
  \vsp_H = \{u\in \vsp \mid u_i = u_j \text{ whenever } i,j\in \cV_{\cA}
  \text{ for some } \cA\}.
\end{equation*}
This admits a two-level hierarchy. The solution of the variational
problem $Au=f$ can be approximated by (for instance) some $u_H$ in
$\vsp_H$ by solving:
\begin{equation}\label{eq:uH}
  (Au_H, v_H) = (f, v_H), \quad \mforall v_H\in \vsp_H.
\end{equation}

The results in the previous section can be used to establish a
posteriori error estimates for the approximate solution $u_H$. In
order to get computable error estimates we also need coarse subspaces
of the edge space $\esp$, from which the $\bm\phi$ in
\eqref{lem:Repin} will be chosen. \red{We point out that in the
  context of a posteriori estimates for finite element methods,
  $\bm\phi$ is sometimes chosen to be the equilibrated flux to make
  the second term $\lVert \Grad^*\bm\phi -f \rVert$ small or zero
  \cite{MR3335498}}. Below we present two constructions of coarse edge
spaces in an aggregation, which are both Raviart-Thomas-like.

\subsection{Coarse edge space via saddle point problem}
We introduce for each interface $\cI=\cI_{\cA\cA{'}}\in\Gamma$ the
vector
$\bm\sigma_\cI = \varepsilon_{\cA\cA'}\mathbf{Q}_\cI\Grad\vone_\cA$,
where $\varepsilon_{\cA\cA'} = \pm 1$ is predetermined for the
interface and $\mathbf{Q}_\cI$ is the orthogonal projection onto
$\cI$. For a given $\bm\psi\in\esp$ define the averaging operator
\begin{equation*}
  \avedg{\bm\psi}{\cI}=\frac{(\bm\psi,\bm\sigma_\cI)}{\norm{\bm\sigma_\cI}^2}.
\end{equation*}
In the context of standard finite element methods, this can be viewed
as an analogue of Raviart-Thomas degree of freedom on a coarser grid,
namely, an analogue of averaging the normal trace of a vector
field. The basis for the coarse edge space $\esp_H$ is constructed via
solving the following local saddle point problem: find
$\bm\varphi_{\cI,\cA}\in\esp_{\cE_\cA}$ and $u_{\cI,\cA}\in \vsp_\cA$,
$(u_{\cI,\cA},\vone)=0$, such that
\begin{IEEEeqnarray*}{rCl'l}
  \cB(\bm\varphi_{\cI,\cA},\bm\psi)+(u_{\cI,\cA},\Div\bm\psi)
  & = & -\cB(\bm\sigma_{\cI},\bm\psi),
  & \mforall\bm\psi\in\esp_{\cE_\cA};\label{local saddle--point problem1} \\
  (\Div\bm\varphi_{\cI,\cA},v) & = & -(\Div\bm\sigma_\cI,v),
  & \mforall v\in \vsp_\cA,(v,\vone)=0.\label{local saddle--point problem2}
\end{IEEEeqnarray*}
An example of a bilinear form
$\cB(\cdot,\cdot)\colon\esp\times\esp\to\Reals$ is the $\ell^2$-inner
product on $\esp$, i.e.,
$\cB(\bm\varphi,\bm\psi) = \sum_{e\in\cE}\bm\varphi_e\bm\psi_e$. One
can also use a different bilinear form for this. The above saddle
problem is the Lagrange multiplier formulation of the following
constraint minimization problem \cite{franco-inf-sup}.
\begin{align*}
  & \bm\varphi_{\cI,\cA} = \argmin_{\bm\psi\in\esp_{\cE_\cA}}\{\cB(\bm\psi+\bm\sigma_\cI,\bm\psi+\bm\sigma_\cI)\}, \\
  & \text{subject to}\quad(\Div\bm\psi,v)=-(\Div\bm\sigma_\cI,v), \ \mforall v\in \vsp_\cA,(v,\vone)=0.
\end{align*}
We solve an analogous problem on $\cA'$ and define the basis function
$\bm\varphi_\cI$ as
\begin{equation*}
  (\bm\varphi_\cI)_e=
  \begin{cases}
    (\bm\varphi_{\cI,\cA})_e,    & \text{if }e\in\cE_\cA;   \\
    (\bm\sigma_\cI)_e,          & \text{if }e\in\cI;      \\
    (\bm\varphi_{\cI,\cA'})_e,   & \text{if }e\in\cE_{\cA'}; \\
    0,                         & \text{for other edges}.
  \end{cases}
\end{equation*}

Define $\pi_H\colon \esp \to \esp_H$ via the canonical interpolation
from the space
$\operatorname{span}\{\bm\varphi_{\cI}\}_{\cI\in \Gamma}$. For a given
$\bm\psi\in\esp$, set
\begin{equation}\label{eq-definition-of-pi-H}
  \pi_H\bm\psi=\sum_{\cI\in\Gamma}\avedg{\bm\psi}{\cI}\bm\varphi_\cI.
\end{equation}
We then have the following theorem.

\begin{theorem}[Theorem~5.3, \cite{2013VassilevskiP_ZikatanovL-aa}]
  \label{thm:commuting-piH}
  Let $Q_H$ be the $\ell^2$-based projection on the space $\vsp_H$,
  that is, averaging on every aggregate. Then for all $\bm\psi\in\esp$
  and all $v\in\vsp$ we have
  \begin{equation*}
    (\Div\pi_H\bm\psi,v)=(Q_H\Div\bm\psi,v).
  \end{equation*}
\end{theorem}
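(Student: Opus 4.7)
The plan is to use the adjoint relation $(\Div\bm\psi,v)=(\bm\psi,\Grad v)$ on both sides together with self-adjointness of $Q_H$, so that the identity becomes $(\pi_H\bm\psi,\Grad v)=(\bm\psi,\Grad Q_H v)$ for every $v\in\vsp$. Since $Q_H$ is the $\ell^2$-orthogonal projection onto $\vsp_H$, I split $v=v_H+v_0$ with $v_H=Q_H v\in\vsp_H$ and $v_0\in\vsp_H^\perp$, the subspace of vectors having zero mean on every aggregate. Inserting this decomposition and using $Q_H v_H=v_H$, $Q_H v_0=0$ reduces the task to proving the two claims
\begin{equation*}
  (\pi_H\bm\psi-\bm\psi,\Grad v_H)=0\ \text{for all } v_H\in\vsp_H,\qquad
  (\pi_H\bm\psi,\Grad v_0)=0\ \text{for all } v_0\in\vsp_H^\perp.
\end{equation*}

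For the first claim, by linearity I test against $v_H=\vone_\cA$ for an arbitrary aggregate $\cA$. Then $\Grad\vone_\cA$ is supported on the interfaces $\cI=\cI_{\cA\cA'}$ incident to $\cA$, and the definition of $\bm\sigma_\cI$ gives $\Grad\vone_\cA|_\cI=\varepsilon_{\cA\cA'}\bm\sigma_\cI$, hence $(\bm\psi,\Grad\vone_\cA)=\sum_{\cI\ni\cA}\varepsilon_{\cA\cA'}(\bm\psi,\bm\sigma_\cI)$. For the left-hand side I expand $\pi_H$ via \eqref{eq-definition-of-pi-H}; because $\bm\varphi_{\cI'}$ equals $\bm\sigma_{\cI'}$ on $\cI'$, vanishes on all other interfaces, and its values on interior edges $\cE_{\cA''}$ pair against a zero $\Grad\vone_\cA$ there, only interfaces incident to $\cA$ contribute. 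Each contribution is $\avedg{\bm\psi}{\cI}\varepsilon_{\cA\cA'}\|\bm\sigma_\cI\|^2=\varepsilon_{\cA\cA'}(\bm\psi,\bm\sigma_\cI)$, matching exactly.

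For the second claim, decompose $v_0=\sum_\cA v_{0,\cA}$, where $v_{0,\cA}$ is the extension by zero of $v_0|_{\cV_\cA}$; each $v_{0,\cA}$ has zero mean on $\cA$. Expanding yields $(\pi_H\bm\psi,\Grad v_0)=\sum_\cA\sum_{\cI}\avedg{\bm\psi}{\cI}(\bm\varphi_\cI,\Grad v_{0,\cA})$. For fixed $\cA$, a term $(\bm\varphi_{\cI_{\cA_1\cA_2}},\Grad v_{0,\cA})$ vanishes unless $\cA\in\{\cA_1,\cA_2\}$, since otherwise $\bm\varphi_\cI$ is zero on every edge incident to $\cA$. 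In the nontrivial case $\cI=\cI_{\cA\cA''}$, I split the pairing into its part on $\cE_\cA$ (where $\bm\varphi_\cI=\bm\varphi_{\cI,\cA}$) and its part on $\cI$ (where $\bm\varphi_\cI=\bm\sigma_\cI$); applying adjoints edge-by-edge converts this into $(\Div\bm\varphi_{\cI,\cA}+\Div\bm\sigma_\cI,\,v_{0,\cA})$, which vanishes by the saddle-point constraint that defines $\bm\varphi_{\cI,\cA}$ combined with the zero-mean property of $v_{0,\cA}$ on $\cA$.

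The main obstacle will be the bookkeeping of how the global $\Div$ operator restricts to a single aggregate when it acts on objects supported only on its interior edges $\cE_\cA$ or only on a single interface $\cI$, together with the sign conventions $\varepsilon_{\cA\cA'}$ and the head/tail orientation of each edge. Once those local--global restrictions are set up unambiguously, both claims follow directly from the construction of $\bm\varphi_\cI$ and from the identity $\bm\sigma_\cI=\varepsilon_{\cA\cA'}\mathbf{Q}_\cI\Grad\vone_\cA$.
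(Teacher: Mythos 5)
Your argument is correct, but it takes a genuinely different route from the proof the paper actually writes out. Note first that the paper never proves the theorem for the saddle-point construction of $\pi_H$ --- it cites it to Vassilevski--Zikatanov --- and the proof it does include is for the spanning-tree variant of $\pi_H$. That proof is a basis-by-basis verification: it reduces to $\bm\psi = \eone_e$ for a single edge $e$ (interior edges giving zero on both sides), then tests against $v = \vone_k$ for individual vertices $k$ and checks the resulting scalar identity $(\bm\varphi^e,\Grad\vone_k) = 1/\lvert\cV_\cA\rvert$ directly from the explicit subtree-size formula. Your proof is instead variational and structural: you split the test function as $v = Q_Hv + (I-Q_H)v$, handle the coarse part by the interpolation (degree-of-freedom) property $\avedg{\bm\varphi_{\cI'}}{\cI} = \delta_{\cI\cI'}$ encoded in $\bm\varphi_\cI\vert_\cI = \bm\sigma_\cI$, and handle the aggregate-wise zero-mean part by invoking the divergence constraint $(\Div\bm\varphi_{\cI,\cA},v) = -(\Div\bm\sigma_\cI,v)$ from the local saddle-point problem. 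Both halves check out (I verified the cancellation $\avedg{\bm\psi}{\cI}\,\varepsilon_{\cA\cA'}\norm{\bm\sigma_\cI}^2 = \varepsilon_{\cA\cA'}(\bm\psi,\bm\sigma_\cI)$ in your first claim and the vanishing of $(\Div(\bm\varphi_{\cI,\cA}+\bm\sigma_\cI),v_{0,\cA})$ in your second), and the sign issues you flag are genuinely only bookkeeping, since $\varepsilon_{\cA\cA'}^2=1$ makes the same sign appear on both sides of each matched term. What your approach buys is that it isolates exactly the two properties of the coarse edge basis that make the diagram commute --- reproduction of the interface degrees of freedom and local divergence-compatibility on zero-mean functions --- so the same two-claim skeleton would also prove the spanning-tree version once one checks $(\bm\varphi^e,\Grad v_{0,\cA}) = 0$ there; the paper's explicit computation buys concreteness but does not expose that structure.
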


The theorem says that the following diagram commutes.
\begin{equation*}
  \begin{CD}
    \color{red}{\esp} @> {\Div} >> {\vsp}     \\
    @ V{\pi_H} VV @ VV {Q_H} V   \\
    {\esp_H} @>> {\Div} > {\vsp_H} \\
  \end{CD}.
\end{equation*}

\subsection{Coarse edge space via spanning tree}
Another way of constructing $\esp_H$ and $\pi_H$ such that the above
diagram commutes is via the spanning trees of the aggregates. For each
interface edge $e=(i,j)$ between aggregates $\cA$ and $\cB$
($i\in\cA, j\in\cB$), fix a spanning tree of $\cA$ rooted at $i$ (note
$\cA$ can be viewed a subgraph of $\cG$), see figure
\ref{fig:spanning-tree}. For an edge $e'$ in the tree and the
corresponding child node $i'$, denote by $m_{e'}$ the size of the
subtree rooted at $i'$. Define $\bm\varphi^e_\cA\in\esp_{\cE_\cA}$ by
\begin{equation*}
  \left(\bm\varphi^e_\cA\right)_{e'} = 
  \begin{cases}
    \pm\dfrac{m_{e'}}{\left\lvert\cV_\cA\right\rvert},
    & \text{if } e' \text{ is an edge of the tree;} \\
    0, & \text{otherwise.}
  \end{cases}
\end{equation*}
Here the ``$\pm$'' takes ``$+$'' sign if the vertex $i'$ is the head
of $e'$ according to the predetermined orientation on the edges of the
graph, and ``$-$'' otherwise.

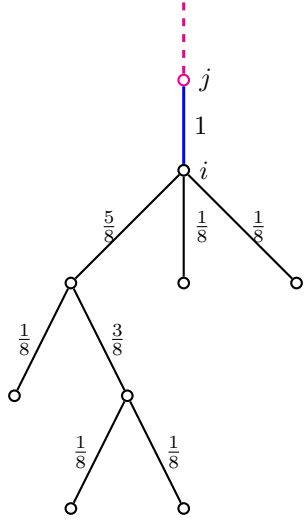
\begin{figure}
  \centering
  \begin{tikzpicture}[
    treenode/.style={circle,draw=black,thick,minimum size=4pt,
      inner sep=0pt}]
    
    \node [treenode,draw=magenta,label={right:$j$}] {}
    child [grow=down,level distance=1.2cm,
    every child/.style={level distance=1.5cm,
      edge from parent/.style={draw=black,thick}}] {
      node [treenode,label={right:$i$}] {}
      child {
        node [treenode] {}
        child {
          node [treenode] {}
          edge from parent node [left] {$\frac{1}{8}$}
        }
        child {
          node [treenode] {}
          child {
            node [treenode] {}
            edge from parent node [left] {$\frac{1}{8}$}
          }
          child {
            node [treenode] {}
            edge from parent node [right] {$\frac{1}{8}$}
          }
          edge from parent node [right] {$\frac{3}{8}$}
        }
        edge from parent node [left] {$\frac{5}{8}$}
      }
      child {
        node [treenode] {}
        edge from parent node [right] {$\frac{1}{8}$}
      }
      child {
        node [treenode] {}
        edge from parent node [right] {$\frac{1}{8}$}
      }
      edge from parent [draw=blue,line width=0.4mm] node [right] {$1$}
    }
    child [grow=up,level distance=1.1cm] {
      edge from parent [draw=magenta,dashed,line width=0.4mm]
    };
  \end{tikzpicture}
  \caption{Construction of $\bm\varphi^e$ via spanning tree. Aggregate
    $\cA$ is in black, $\cB$ is in magenta, and the blue edge is the
    interface edge $e=(i,j)$.}
  \label{fig:spanning-tree}
\end{figure}

Similarly we define $\bm\varphi^e_\cB\in\esp_{\cE_\cA}$ via a spanning
tree of $\cB$ rooted at $j$. Then let $\bm\varphi^e\in\esp$ be such
that
\begin{equation*}
  (\bm\varphi^e)_{e'} =
  \begin{cases}
    \left(\bm\varphi^e_\cA\right)_{e'},  & \text{if } e' \in \cE_\cA; \\
    1,                                  & \text{if } e' = e; \\
    -\left(\bm\varphi^e_\cB\right)_{e'}, & \text{if } e' \in \cE_\cB; \\
    0,                                  & \text{otherwise.}
  \end{cases}
\end{equation*}
We now define $\pi_H\colon\esp\to\esp_H$ in the following way.
\begin{equation*}
  \pi_H\eone_e =
  \begin{cases}
    \bm\varphi^e, & \text{if } e \text{ is an interface edge;} \\
    \bm0,         & \text{otherwise.}
  \end{cases}
\end{equation*}
Then theorem \ref{thm:commuting-piH} holds for the $\pi_H$ as defined
above as well.

\begin{proof}
  Note that we want
  \begin{equation}\label{eq:piH-requirement}
    (\Div\pi_H\bm\psi,v)=(Q_H\Div\bm\psi,v),
    \quad \mforall \bm\psi\in\esp, \ \mforall v\in\vsp.
  \end{equation}
  
  For any edge $e$ interior to an aggregate and for $\eone_e\in\esp$,
  we have $Q_H\Grad^*\eone_e = \bm0$. On the other hand,
  $\pi_H\eone_e = \bm0$ by definition. In this case
  \eqref{eq:piH-requirement} holds.

  For an interface edge $e=(i,j)$, $i\in\cA$, $j\in\cB$, to show
  \eqref{eq:piH-requirement} we observe the left-hand side is
  $(\pi_H\eone_{e}, \Grad v)$ and the right-hand side is
  \begin{equation}\label{eq:piH-condition}
    (\eone_{e}, \Grad Q_Hv)
    = \frac{1}{\lvert\cV_\cA\rvert}\sum_{k\in\cV_\cA}v_k -
    \frac{1}{\lvert\cV_{\cA'}\rvert}\sum_{l\in\cV_{\cB}}v_l.
    % \quad \mforall v\in\vsp.
  \end{equation}
  For $v = \vone_k$ where $k$ is any vertex not in $\cA$ or $\cB$,
  \eqref{eq:piH-condition} takes value 0. By definition $\pi_H\eone_e$
  vanishes on edges that are neither interior to $\cA$ or $\cB$ nor in
  the interface $\cI_{\cA\cB}$, so
  $(\pi_H\eone_{e}, \Grad\vone_k) = 0$. The left-hand side is equal to
  the right-hand side. For $v = \vone_k$ where $k$ is any vertex in
  $\cA$, \eqref{eq:piH-condition} becomes
  $\frac{1}{\lvert\cV_\cA\rvert}$. Thus we need to verify
  \begin{equation*}
    (\pi_H\eone_e, \Grad\vone_k) = \frac{1}{\lvert\cV_\cA\rvert}
    \quad\text{or}\quad
    (\bm\varphi^e, \Grad\vone_k) = \frac{1}{\lvert\cV_\cA\rvert}.
  \end{equation*}
  This is immediate from the construction of $\bm\varphi^e$. The same
  argument applies if $k$ is in $\cB$.
\end{proof}

A basis for $\esp_H$ is then constructed in the similar fashion as in
the previous subsection, specifically, for $\cI = \cI_{\cA\cB}$,
\begin{equation*}
  \bm\varphi_\cI \coloneqq \sum_{e\in\cI}\delta_e\bm\varphi^e,
  \quad \text{where} \quad e=(i,j), \quad
  \delta_e =
  \begin{cases}
    1,  & \text{ if } i \in \cA; \\
    -1, & \text{ if } i \in \cB.
  \end{cases}
\end{equation*}

\section{Applications to aggregation in graphs}
We apply the error estimator in lemma \ref{lem:Repin} to the graph
Laplacian problem \eqref{eq:problem}, where $\vsp$ and $\esp$ are just
the vertex and edge spaces of a graph. The standard graph Laplacian
$A$ takes the form $A = \Div\id_{\esp}\Grad$, and the norms
$\norm{\cdot}_\vsp$ and $\norm{\cdot}_{\esp}$ are $\ell^2$-norms. We
get the following estimate for the error of the approximate solution
$u_H$.
\begin{equation}\label{eq:estimator}%\inf_{\bm\phi\in\esp}
  \norm{u-u_H}_A \leq \norm{\Grad u_H - \bm\phi} +
  C_P^{-1}\norm{\Div\bm\phi - f}% \eqqcolon \eta(\bm\phi).
\end{equation}
Note the graph Laplacian $A$ is only positive semidefinite. However if
we replace $\vsp$ with the hyperplane in the vertex space that is
orthogonal to the first eigenvector $[1\ 1\ \cdots\ 1]^T$ of $A$ then
$A$ becomes positive definite. In this case $C_P$ is the smallest
positive (second) eigenvalue of $A$.

We use the error estimate \eqref{eq:estimator} to devise an algorithm
that generates aggregations adaptive to not only $A$, but also the
right-hand side $f$. For given $f$ and any aggregation we can minimize
the right-hand side of \eqref{eq:estimator} over a coarse edge space
$\esp_H$. We denote this minimum by
\begin{equation*}
  \eta(f,\cA) = \inf_{\bm\phi\in\esp_H} \norm{\Grad u_H - \bm\phi}
  + C_P^{-1}\norm{\Div\bm\phi - f}.
\end{equation*}
We then devise a reshaping algorithm that finds a new aggregation,
aiming to reduce the this estimator. To do this we first localize our
estimator. Note
\begin{equation*}
  \norm{\Grad u_H - \bm\phi} + C_P^{-1}\norm{\Div\bm\phi - f}
  \leq 2^{\frac{1}{2}}\tilde\eta^{\frac{1}{2}},
\end{equation*}
where $\tilde\eta$ is
\begin{IEEEeqnarray*}{rCl}
  \tilde\eta
  & \coloneqq & \norm{\Grad u_H-\bm\phi}^2+C_P^{-2}\norm{\Div\bm\phi-f}^2 \\
  & = & \sum_{\cI}\sum_{e\in\cI}\norm{\Grad u_H-\bm\phi}_e^2 + \sum_{\cA}\bigg[\sum_{e\in\cE_{\cA}}\norm{\Grad u_H-\bm\phi}_e^2 \\
  &   & \hspace{1em}+C_P^{-2}\norm{\Div\bm\phi-f}_{\cV_{\cA}}^2\bigg] \\
  & = & \sum_{\cA} \bigg[\frac{1}{2}\sum_{\cA'}\sum_{e\in\cI_{\cA\cA'}}\norm{\Grad u_H-\bm\phi}_e^2+\sum_{e\in\cE_{\cA}}\norm{\Grad u_H-\bm\phi}_e^2 \\
  &   & \hspace{1em}+C_P^{-2}\norm{\Div\bm\phi-f}_{\cV_{\cA}}^2\bigg] \\
  & \eqqcolon & \sum_{\cA}\tilde\eta_{\cA}.
\end{IEEEeqnarray*}
The reshaping algorithm \ref{alg:reshaping} then iteratively splits
the aggregates on which $\tilde\eta_{\cA}$ is large, until some
stopping criterion is met. The stop criterion can be, e.g., the number
of aggregates $n_c$ becomes larger than a given threshold $N$, or the
error estimate $\eta(f,\cA)$ drops below some preset value.

\begin{algorithm}
  \caption{Reshaping of aggregation}\label{alg:reshaping}
  \begin{algorithmic}[1]
    \State \red{Suppose a graph $\cG$ and an aggregation
      $\{\cA_k\}_{k=1}^{n_c}$ are given.}
    
    \State Compute the approximate solution $u_H$. Then find $\bm\phi$
    that minimizes $\eta(f,\cA)$\label{alg:init}.
    
    \State Split all $\cA_k$ for which\label{alg:split}
    \begin{equation*}
      \tilde\eta_{\cA_k} > \frac{\sum_{i=1}^{n_c}\tilde\eta_{\cA_i}}{n_c}.
    \end{equation*}
    
    \State Check if the stopping criterion is met, if not go to step
    \ref{alg:init}.
  \end{algorithmic}
\end{algorithm}
To mark aggregates for refinement (splitting) we have used an analogue
of the equidistribution marking well known in adaptive finite element
methods. Other strategies for marking, such as maximum and D\"oerfler
type marking \cite{2009NochettoSiebert, 1996DoerflerW-aa} are subject
of future research.  All aggregates marked for refinement are split
using the hierarchy already available via matching described in the
next section, since this can be done at virtually zero computational
cost.

\section{Numerical examples: matching, un-matching and reshaping}
In this section we perform several numerical experiments on the error
estimates and reshaping algorithm proposed above.

\subsection{Experiment I: guided reshaping starting from a coarse
  aggregation}
We test the reshaping algorithm using matching on graphs. By matching
we mean an algorithm that aggregates a graph by grouping two vertices
together at a time \cite{MR1639073}. To be exact, the matching
algorithm works as follows.
\begin{enumerate}
\item Choose a vertex of smaller degree and group it with one of its
  unmatched neighbors, if such neighbor exists. The degree of a vertex
  is defined to be the number of edges attached to this vertex.
\item Repeat this until there are only isolated vertices which have no
  unmatched neighbors. Then group each isolated vertex with a neighbor
  with which it has the most connections.
\end{enumerate}
The matching algorithm can be performed repeatedly to generate a
hierarchy of coarse aggregates.

Figure \ref{fig:reshaping-scheme} then shows a guideline of the
experiment. We start from $V$, the finest (original) vertex space, and
iteratively apply the matching algorithm. Let $\vsp_{\widetilde{H}}$
be the subspace of $\vsp$ obtained after $k_0$ iterations of matching,
and $\vsp_J$ be after $k_1$ iterations, where $k_0<k_1$ so that
$\vsp_J$ is coarser than $\vsp_{\widetilde{H}}$. We solve for
$u_{\widetilde{H}}$ on $\vsp_{\widetilde{H}}$ and get the error
$e_{\widetilde{H}} = \|u-u_{\widetilde{H}}\|_A$, with the right-hand
side $f$ of \eqref{eq:problem} being a ``smooth'' vector (smoothed by
Gauss-Seidel iterations). The reshaping algorithm is then performed on
$\vsp_J$ iteratively, where the chosen aggregates are split in the
same way they were grouped during matching. The newly generated
aggregations are represented by nodes on the oblique line in figure
\ref{fig:reshaping-scheme}. We do this iteratively until the error
$e_H$ on $\vsp_H$ becomes smaller than $e_{\widetilde{H}}$. We then
compare $\vsp_{\widetilde{H}}$ and $\vsp_H$ to see if the algorithm
reduces the number of aggregates when achieving the same
error. \red{All graphs in the following examples are taken from the
  SuiteSparse Matrix Collection.}

\begin{figure}
  \centering
  \begin{tikzpicture}[scale=.8,auto,swap,
    vertex/.style={circle,draw,thick,minimum size=4pt,inner sep=0pt},
    edge/.style={draw,thick,-}, dashed
    edge/.style={draw,thick,dashed}]
    % First we draw the vertices
    \foreach \pos / \number / \name / \fr in {(0,5.5)/0/V/2,
      (0,4.5)/1/V_1/3, (0,3.5)/2/V_2/4, (0,2)/3/V_{\widetilde{H}}/6,
      (0,0)/4/V_J/7, (.6,1)/5//10}
    \node[vertex,label={left:$\name$}] (\number) at \pos {};
    \foreach \pos / \number / \name / \fr in {(1.2,2)/6/V_H/11}
    \node[vertex,label={right:$\name$}] (\number) at \pos {};
    % Side notes
    \foreach \pos / \note in {(2.5,2.5) / \dim V_H < \dim
      V_{\widetilde{H}}, (2.5,1.6) / \lVert e_H\rVert_A = \lVert
      e_{\widetilde{H}}\rVert_A}
    \node[label={right:$\note$}] at \pos {};
    % Connect vertices with edges
    \foreach \source / \dest / \fr in {0/1/3, 1/2/4, 4/5/10}
    \path[edge] (\source) -- (\dest);
    \foreach \source / \dest / \fr in {2/3/6, 3/4/7, 5/6/11}
    \path[dashed edge] (\source) -- (\dest);
  \end{tikzpicture}
  \caption{Guided reshaping scheme for generating aggregations on
    graphs}
  \label{fig:reshaping-scheme}
\end{figure}
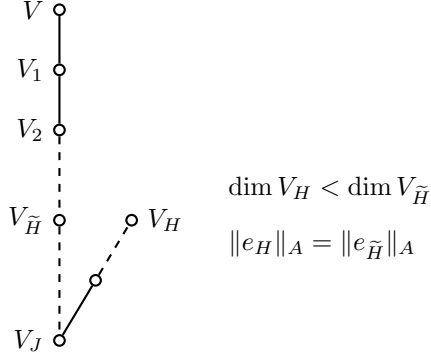

\subsubsection*{\textbf{Example 1}}
The first example is the graph barth5. barth5 has 15606 vertices and
45878 edges. The second eigenvalue of its standard graph Laplacian
matrix is $C_P = 0.02776$, and on average each vertex has 5.9
neighbors.  The results are shown in table
\ref{tab:reshaping-barth5}. We see that in achieving the same error,
reshaping significantly reduces the number of aggregates compared to
matching. Note although the estimation via spanning tree does not give
as low efficiency, it results in more reduction of $n_c$ in many
cases. Figure \ref{fig:reshaping-barth5} plots the aggregations
obtained using matching with $k_0=6$ and reshaping started from
$k_1=7$. Note that with reshaping (figure \ref{fig:reshaping2}), fewer
aggregates are generated and aggregates close to the ``boundary'' are
more likely to be grouped together.

\begin{table}
  \centering
  \begin{tabular}{l|r|c|@{}c@{}|r|c|@{}c@{}}
    & \multicolumn{3}{c|}{$\esp_H$ via saddle point problem}
    & \multicolumn{3}{c}{$\esp_H$ via spanning trees} \\\hline
    Aggregation & $n_c$ & $\eff$ & $\norm{u_H-u}_A/\norm{u}_A$
    & $n_c$ & $\eff$ & $\norm{u_H-u}_A/\norm{u}_A$ \\\hline
    $k_0=3$                & $1798$ & $1.9465$ & $0.8456$   & $1798$ & $1.4298$ & $0.8456$ \\
    $k_1=7$ then reshape   & $742$  & $1.1667$ & $0.8223$   & $699$  & $1.6414$ & $0.8224$ \\\hline
    $k_0=4$                & $837$  & $1.7117$ & $0.9020$   & $837$  & $1.4897$ & $0.9020$ \\
    $k_1=7$ then reshape   & $354$  & $1.1532$ & $0.8939$   & $312$  & $1.5308$ & $0.9000$ \\\hline
    $k_0=5$                & $395$  & $1.5754$ & $0.9335$   & $395$  & $1.4731$ & $0.9335$ \\
    $k_1=7$ then reshape   & $250$  & $1.1587$ & $0.9195$   & $216$  & $1.4928$ & $0.9262$ \\\hline
    $k_0=6$                & $190$  & $1.4575$ & $0.9586$   & $190$  & $1.4091$ & $0.9586$ \\
    $k_1=7$ then reshape   & $143$  & $1.1468$ & $0.9499$   & $155$  & $1.4574$ & $0.9446$ \\\hline
  \end{tabular}
  \caption{Comparison of direct matching and reshaping on the graph
    barth5, at different levels of coarsening. We show results
    obtained for both constructions of $\esp_H$ introduced in section
    \ref{sec:coarsening}. Here
    $\eff \coloneqq \eta/\lVert u_h-u\rVert_A$ represents the
    efficiency of the operator.}
  \label{tab:reshaping-barth5}
\end{table}

\begin{figure}
  \centering
  \begin{subfigure}{0.49\textwidth}
    \centering
    \includegraphics[width=\linewidth]{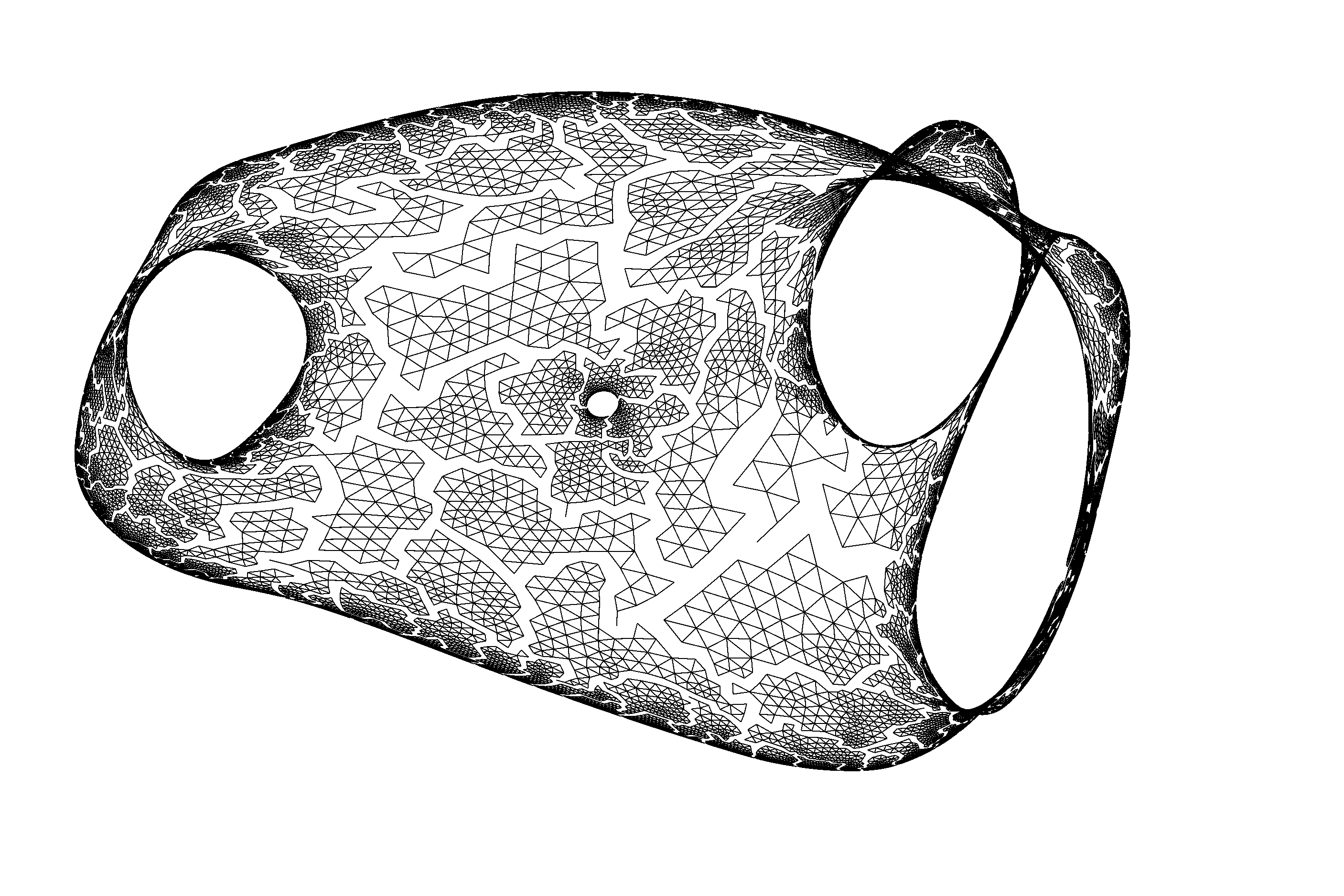}
    \caption{$k_0=6$, $n_c=190$}
  \end{subfigure}
  \begin{subfigure}{0.49\textwidth}
    \centering
    \includegraphics[width=\linewidth]{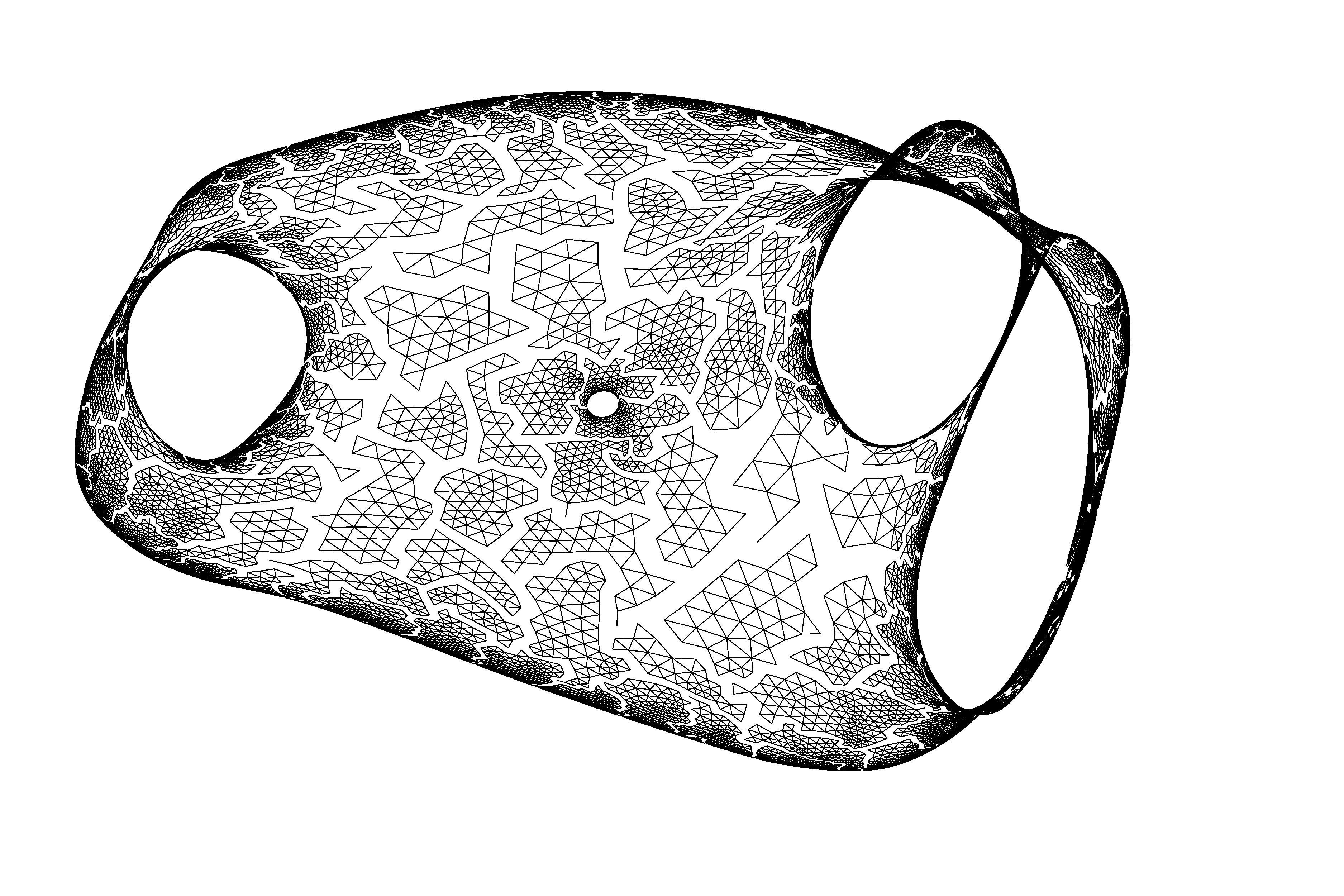}
    \caption{$k_1=7$ with reshaping, $n_c=129$}
    \label{fig:reshaping2}
  \end{subfigure}
  \caption{Plots of aggregations constructed with and without
    reshaping on the graph barth5}
  \label{fig:reshaping-barth5}
\end{figure}

\subsubsection*{\textbf{Example 2}}
The second graph we test is the graph power. power is a smaller graph
of 4941 vertices and 6594 edges. The second eigenvalue of its
Laplacian matrix is $C_P=0.02755$. The vertices have an average degree
of 2.7. Table \ref{tab:reshaping-power} and figure
\ref{fig:reshaping-power} show the numerical results. Same as in
example 1, the reshaping algorithm is able to achieve smaller
approximation error with fewer aggregates. Again the method using
$\esp_H$ via spanning trees is more selective in choosing the
aggregates to split and thus results in fewer aggregates at the same
level.

\begin{table}
  \centering
  \begin{tabular}{l|r|c|@{}c@{}|r|c|@{}c@{}}
    & \multicolumn{3}{c|}{$\esp_H$ via saddle point problem}
    & \multicolumn{3}{c}{$\esp_H$ via spanning trees} \\\hline
    Aggregation & $n_c$ & $\eff$ & $\norm{u_H-u}_A/\norm{u}_A$
    & $n_c$ & $\eff$ & $\norm{u_H-u}_A/\norm{u}_A$ \\\hline
    $k_0=3$                & $441$ & $1.1876$ & $0.8373$   & $441$ & $1.2571$ & $0.8373$ \\
    $k_1=7$ then reshape   & $121$ & $1.2703$ & $0.8346$   & $150$ & $1.8936$ & $0.7855$ \\\hline
    $k_0=4$                & $188$ & $1.2797$ & $0.8798$   & $188$ & $1.4082$ & $0.8798$ \\
    $k_1=7$ then reshape   & $121$ & $1.2703$ & $0.8346$   & $75$  & $1.8390$ & $0.8786$ \\\hline
    $k_0=5$                & $86$  & $1.3407$ & $0.9225$   & $86$  & $1.5524$ & $0.9225$ \\
    $k_1=7$ then reshape   & $62$  & $1.3779$ & $0.9107$   & $55$  & $1.8603$ & $0.9018$ \\\hline
    $k_0=6$                & $40$  & $1.4681$ & $0.9495$   & $40$  & $1.7373$ & $0.9495$ \\
    $k_1=7$ then reshape   & $39$  & $1.3945$ & $0.9398$   & $38$  & $1.8234$ & $0.9410$ \\\hline
  \end{tabular}
  \caption{Comparison of direct matching and reshaping on the graph
    power, at different levels of coarsening. We show results obtained
    for both constructions of $\esp_H$ introduced in section
    \ref{sec:coarsening}.}
  \label{tab:reshaping-power}
\end{table}

\begin{figure}
  \centering
  \begin{subfigure}{0.50\textwidth}
    \includegraphics*[width=\linewidth]{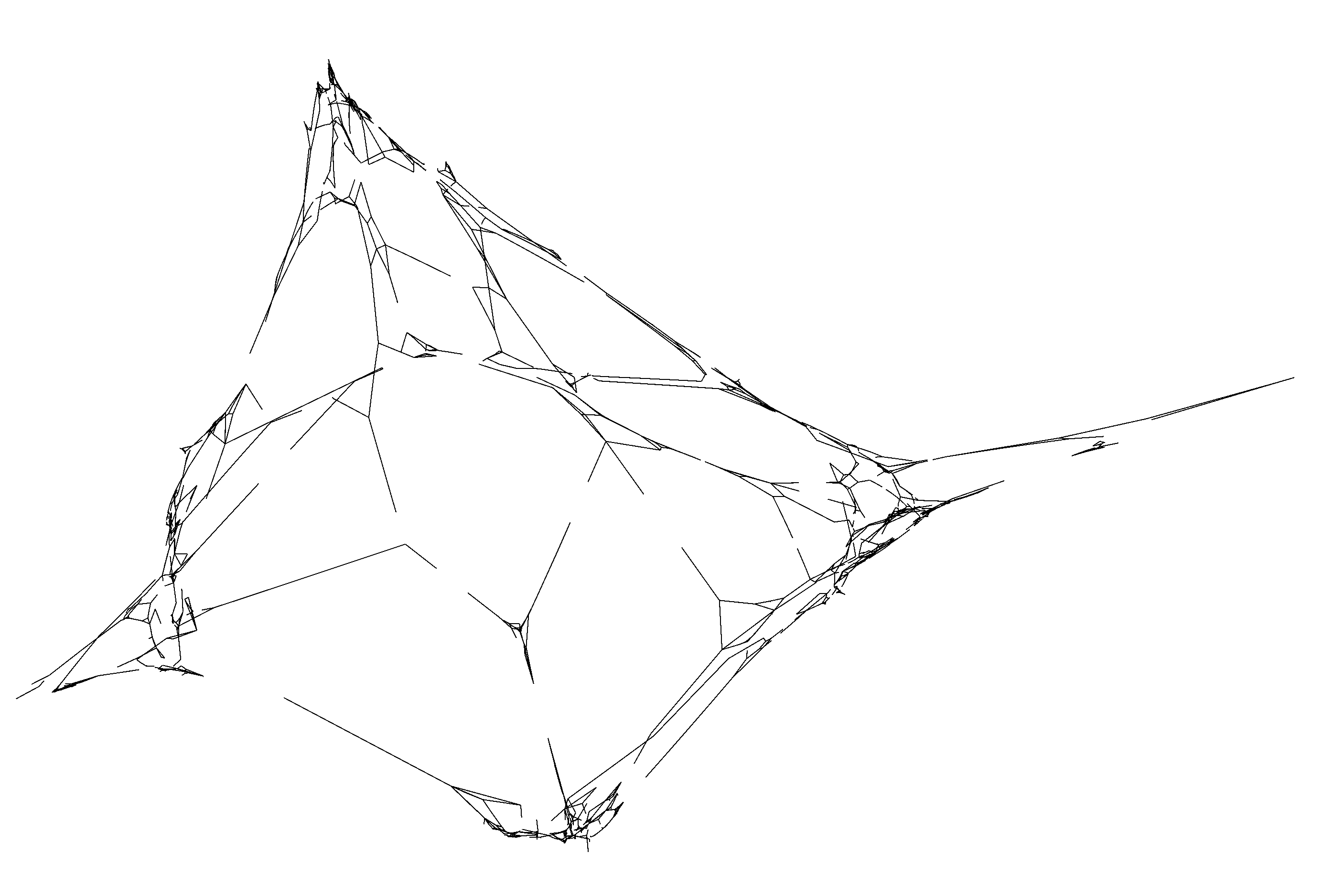}
    \caption{$k_0=5$, $n_c=86$}
  \end{subfigure}~
  \begin{subfigure}{0.50\textwidth}
    \includegraphics*[width=\linewidth]{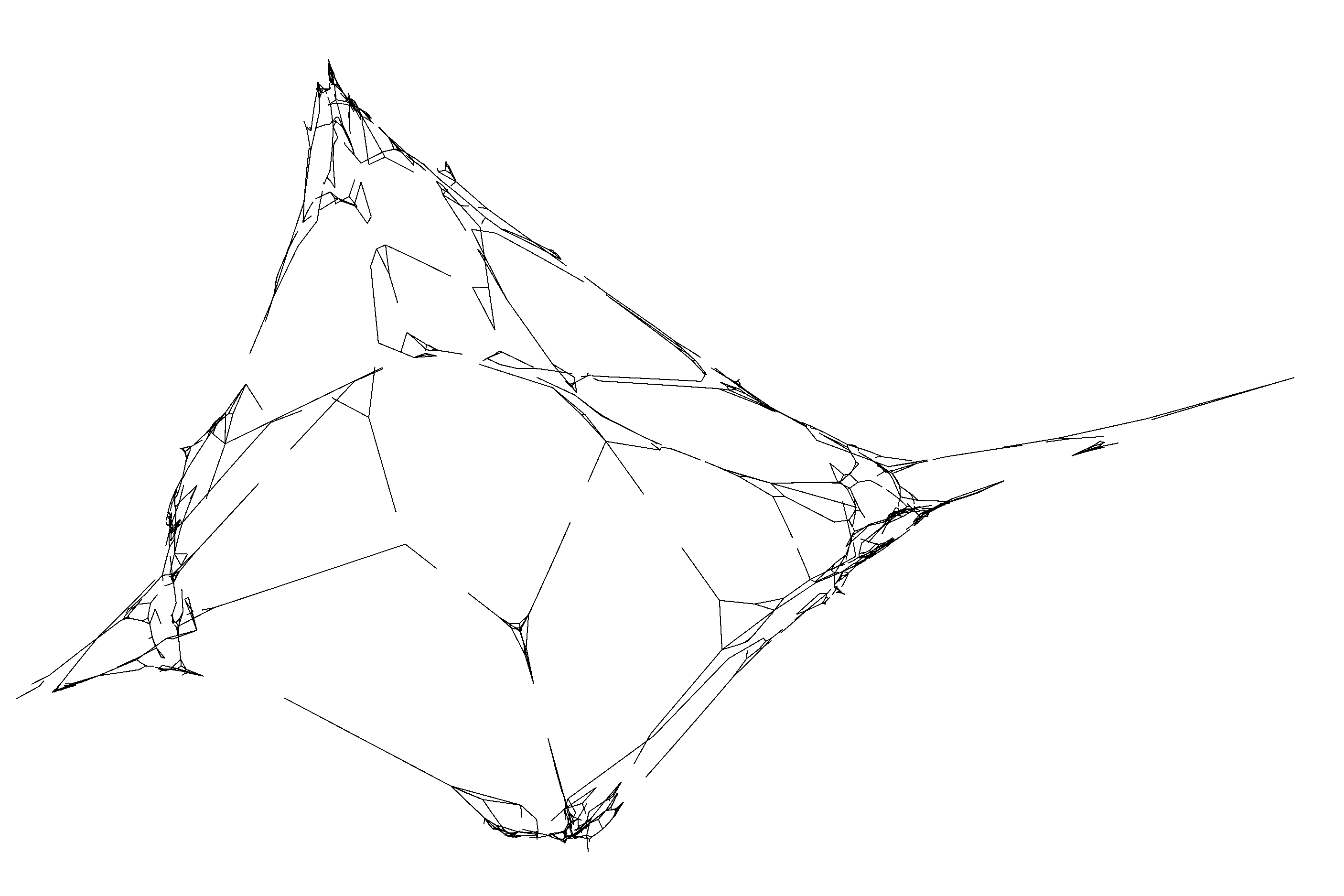}
    \caption{$k_1=7$ with reshaping, $n_c=55$}
  \end{subfigure}
  \caption{Plots of aggregations constructed with and without
    reshaping on the graph power}
  \label{fig:reshaping-power}
\end{figure}

\subsubsection*{\textbf{Example 3}}
Table \ref{tab:reshaping-vsp} shows the numerical results on the graph
vsp\_vibrobox\_scagr7-2c\_rlfddd, using the $\esp_H$ constructed via
spanning trees. This is a larger graph with 77328 vertices and 435586
edges. The second eigenvalue of the standard graph Laplacian is
$C_P=0.3033$, and on average each vertex has 11.3 neighbors. We can
also see huge reduction in the number of aggregates for achieving the
same approximation error. We point out that for all cases, the effect
of reduction becomes more significant when it comes to the finer
levels (top of the tables), since this allows for more space for
improvement.

\begin{table}
  \centering
  \begin{tabular}{l|r|c|c}
    Aggregation           & $n_c$   & $\eta/\lVert u_h-u\rVert_A$  & $\lVert u_h-u\rVert_A/\lVert u\rVert_A$ \\\hline
    $k_0=3$               & $3582$  & $1.1339$                     & $0.8463$                      \\
    $k_1=7$ then reshape  & $611$   & $1.3409$                     & $0.8144$                      \\\hline
    $k_0=4$               & $1784$  & $1.1466$                     & $0.8926$                      \\
    $k_1=7$ then reshape  & $400$   & $1.3013$                     & $0.8833$                      \\\hline
    $k_0=5$               & $890$   & $1.1934$                     & $0.9187$                      \\
    $k_1=7$ then reshape  & $341$   & $1.2858$                     & $0.9076$                      \\\hline
    $k_0=6$               & $444$   & $1.2514$                     & $0.9438$                      \\
    $k_1=7$ then reshape  & $298$   & $1.2754$                     & $0.9235$                      \\\hline
  \end{tabular}
  \caption{Comparison of direct matching and reshaping on the graph
    vsp\_vibrobox\_scagr7-2c\_rlfddd, at different levels of
    coarsening. The method using $\esp_H$ via spanning trees is
    reported here.}
  \label{tab:reshaping-vsp}
\end{table}

\subsection{Experiment II: point source example}
We further test the reshaping algorithm with the exact solution $u$
chosen to be the indicator function of some vertex (and the right-hand
side $f$ is set accordingly), and see if the aggregates can be adapted
to the solution. In figure \ref{fig:reshaping-ind} the chosen vertex
is marked by a red dot. The reshaping algorithm starts with a coarse
aggregation with number of aggregates $n_c=2$ (figure
\ref{fig:reshaping-ind1}) and ends at $n_c=15$ (figure
\ref{fig:reshaping-ind2}). We can see that the algorithm is adaptive
to the solution in the sense that it only selects those aggregates
around the chosen vertex to split, thus approximating the ``point
source'' solution using very few aggregates.

\begin{figure}
  \begin{subfigure}{0.49\textwidth}
    \includegraphics[width=\linewidth]{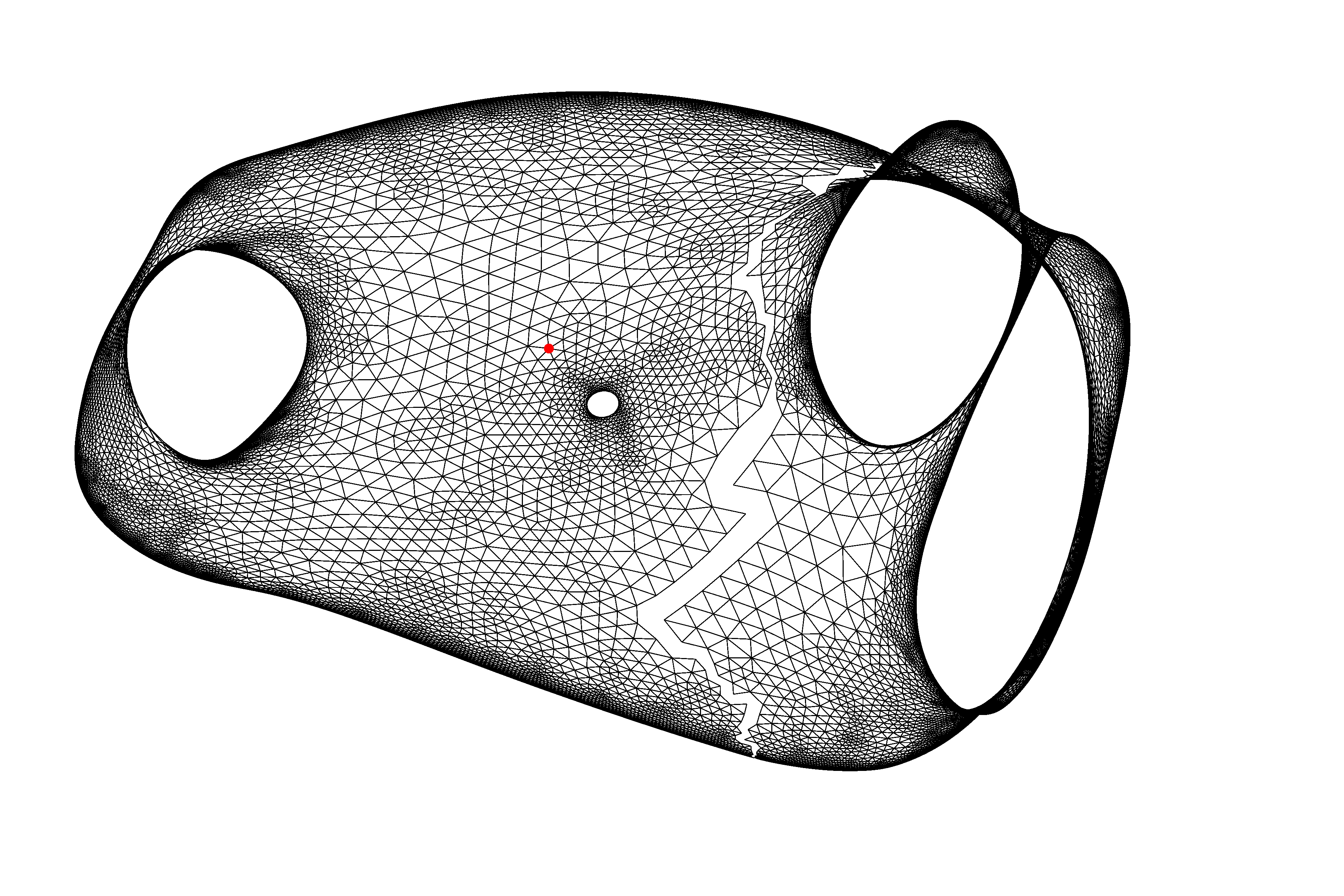}
    \caption{}\label{fig:reshaping-ind1}
  \end{subfigure}
  \begin{subfigure}{0.49\textwidth}
    \includegraphics[width=\linewidth]{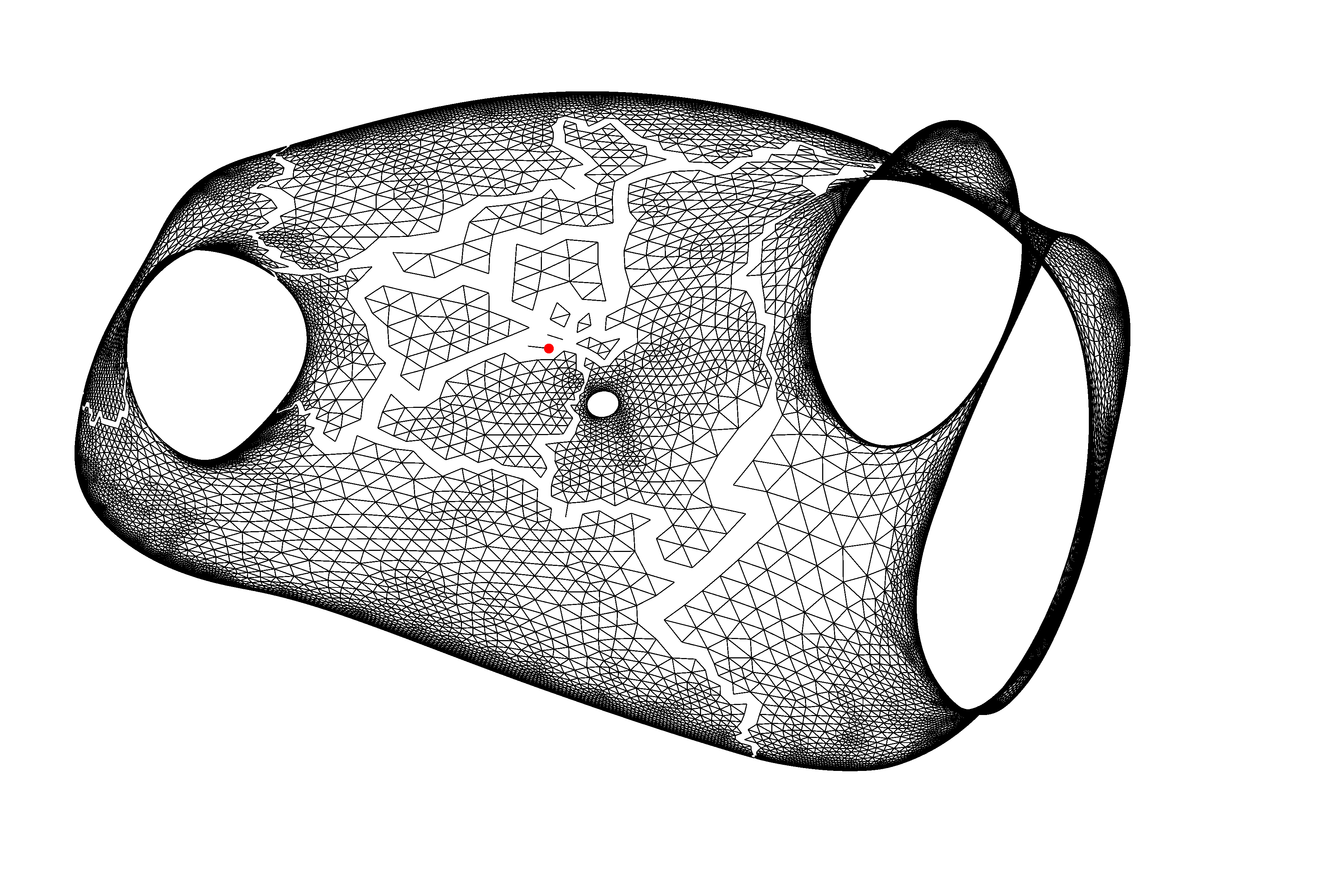}
    \caption{}
    \label{fig:reshaping-ind2}
  \end{subfigure}
  \caption{Reshaping algorithm adapted to a $u$ chosen to be the
    indicator function of a vertex.}
  \label{fig:reshaping-ind}
\end{figure}

\section{Conclusions and outlook}
In this paper we proposed an aggregation based multilevel hierarchy on
graphs for coarsening both the vertex and edge spaces. This hierarchy
is used together with the hypercircle a posteriori error estimates to
give a reliable bound on the error of approximate solutions to the
graph Laplacian equation. We introduced the practical algorithm for
reshaping of a given aggregation based on the hypercircle error
estimates, and the generated aggregations were compared with a
matching algorithm. We observed improvements in the quality of
aggregations using our reshaping algorithm.

Some further work may be done on studying how different subspaces
$\esp_H$ can be chosen for right-hand side $f$ of various properties,
as well as other mechanisms for splitting the aggregates chosen in the
reshaping algorithm.

\bibliographystyle{plain}
\bibliography{bib_adaptive}

\end{document}